\documentclass[12pt,reqno]{amsart}
\usepackage{amssymb}
\usepackage[all]{xy}

\setlength{\oddsidemargin}{0.in}
\setlength{\evensidemargin}{0.in}
\setlength{\textwidth}{6.46in}
\setlength{\textheight}{8.8in}

\topmargin=0.1in
\def\Q {{\mathbb Q}}
\def\A{{\mathbb A}}

\def\cL {{ \mathcal L}}
\def\cE {{ \mathcal E}}
\def\R {{\mathbb R}}
\def\N {{\mathbb N}}

\def\cF {{\mathcal{ F}}}

\def\Z {{\mathbb Z}}

\def\C {{\mathbb C}}

\def \Im{\mbox{Im\,}}

\newtheorem{theorem}{Theorem}

\newtheorem{proposition}{Proposition}

\newtheorem{lemma}{Lemma}

\newtheorem{definition}{Definition}

\newtheorem{comment}{Comment}

\theoremstyle{remark}
\def \b {\bigskip}
\def \m {\medskip}
\begin{document}

\title []{On the analogy between  $L-$functions
and  Atiyah-Bott-Lefschetz trace formulas for foliated spaces}

\author{Eric Leichtnam}
\date{\today}

\address{Institut Math\'ematique de Jussieu-PRG et CNRS, Batiment Sophie Germain
 75013 Paris,  France}

\begin{abstract}
Christopher Deninger has developed an infinite dimensional
cohomological formalism which would allow to prove 
the expected properties of the motivic $L-$functions (including the
 Dirichlet $L-$ functions).
These cohomologies are (in general) not yet constructed.
Deninger has argued that they  might be 
constructed as  leafwise cohomologies associated to ramified leafwise flat 
vector bundles on suitable
foliated spaces. In the case of number fields we propose a set of axioms allowing to make this more precise and to motivate new theorems. We also check the coherency of these axioms and from 
them we derive "formally" an Atiyah-Bott-Lefschetz trace formula which would imply Artin conjecture for a Galois extension of $\Q$.
\end{abstract}

\maketitle

{\it Tribute to Gianni Rivera for his 70th birthday.}
\tableofcontents

\section{Introduction.}$\;$

Christopher Deninger 's 
approach to the study of arithmetic zeta and motivic $L-$functions proceeds in two steps (see for instance \cite{D 3}, \cite{D 3b} ).

In the {\bf  first step}, he postulates the existence of infinite dimensional 
cohomology groups satisfying some "natural properties". From these data, he has elaborated a formalism which would  allow him to prove 
 the expected 
properties for the arithmetic zeta functions: functional equation,
conjectures of Artin, Beilinson, Riemann...etc. There it is crucial 
to interpret the so called explicit formulae for 
the zeta and motivic $L-$functions as  Lefschetz trace formulae. 

The {\bf second step} consists in
 constructing  these cohomologies. 
Deninger has given some hope that for zeta functions these cohomologies might be constructed 
as leafwise cohomologies of suitable foliated spaces. Moreover, in the case of motivic $L-$functions, one should consider 
flat (ramified) vector bundles on the corresponding foliated space, see Deninger \cite{D 0c}.
Very little is known in this direction
at the moment, but this second step seems to be a good motivation 
to develop interesting mathematics even if they remain far from
the ultimate goal.

In Section 2 we recall Deninger's cohomological formalism
in the case of the Riemann zeta and (primitive) Dirichlet $L-$functions $\Lambda (\chi , s)$. We point out 
a dissymmetry in the explicit formulae \eqref{eq:EF}  and \eqref{eq:EFchi} between the 
coefficients of $\delta_{ k \log p}$ and  $\delta_{ -k \log p}$, see 
Comment \ref{com:disym}.

In Section 3 is devoted to the description of the  Lefschetz trace formula for 
a flow acting on a codimension one foliated space. 
In Section 3.1 we recall the Guillemin-Sternberg trace formula which 
is indeed an important computational tool for this goal.

In Section 3.2 we recall the theorem of Alvarez-Lopez and Kordyukov.

They consider a flow  $(\phi^t)_{t \in \R}$ acting on $(X, \mathcal{F})$ where 
the compact three dimensonal manifold $X$ is foliated by Riemann surfaces.
They assume that $(\phi^t)_{t \in \R}$ preserves globally the foliation and 
is transverse to the foliation. Then Alvarez-Lopez and Kordyukov 
define a suitable leafwise Hodge cohomology on which $\phi^t$ acts and they prove an  Atiyah-Bott-Lefschetz trace 
 formula (Theorem \ref{thm:AK})
which has some similarities with $\eqref{eq:EF}$ for $t$ real positive. The  dissymmetry mentioned 
above for (1) does not hold here. Nevertheless, by comparison with \eqref{eq:EF},
 it suggests that there should exist a flow $(\phi^t)_{t \in \R}$ acting on a certain 
 space $S_\Q$ with the following property.
To each prime number $p$ [resp. the archimedean place of $\Q$] there 
should correspond a closed orbit with length $\log p$ [resp. a stationary point] 
of the flow $\phi^t.$ But since the   dissymmetry mentioned 
above for (1) does not hold in Theorem \ref{thm:AK}, the space $S_\Q$ cannot be a foliated manifold but rather a so called laminated foliated space. Its transverse structure might involve 
the $p-$adic integers or even the adeles (see \cite{D 2} and  \cite{El1}   for a simple example), but we shall not dwell here on this important point.

In Section 3.3 we introduce  the concept of a ramified flat line bundle $\cL_\rho \rightarrow X $ around a finite number of closed orbits, and 
define the associated leafwise Hodge cohomology groups $H^j_\tau ( X ; \cL_\tau)$, $0\leq j \leq 2$. We then prove 
a Lefschetz trace formula where the ramified closed orbits of  $(X, \phi^t)$ do not appear: see  Theorem \ref{thm:traceram}. This trace formula has some similarities with the explicit formula 
\eqref{eq:EFchi} for Dirichlet $L-$functions, the ramified closed orbits corresponding to the (ramified) prime numbers which divide the conductor 
of the Dirichlet character.

In Section 4, we consider more generally the Dedekind zeta function $\widehat{\zeta}_K(s)$ of a number field $K$ and recall 
the associated explicit formula \eqref{eq:EFK}. Then, making  a synthesis of  Deninger's work, 
 we state several assumptions for a laminated foliated 
 space $(S_K, \mathcal{F}, g, \phi^t)$ which (if satisfied) would allow to construct the required
 (leafwise) cohomology groups for the Dedekind zeta function. In particular, the explicit formula \eqref{eq:EFK} should be interpreted as 
 a (leafwise) Atiyah-Bott-Lefschetz trace formula. 
 We compare carefully the contributions
of the archimedean places of $K$ in \eqref{eq:EFK} with the contribution 
of a stationary point in the Guillemin-Sternberg formula: we explain an apparent incompatibility in the case of real places.
Next, we come back to the case of a primitive Dirichlet character  $\chi $  mod $m$  and consider the cyclotomic field $K= \Q[e^{\frac{ 2 i \pi } {m}}]$ associated to $\chi$ with Galois group $G= ( \Z / m\Z)^*$. So $\chi$ defines a group homomorphism $\chi: G \rightarrow S^1$.
We consider the ramified flat complex line bundle over $S_\Q$:
$$
\cL_\chi= \frac{ S_K \times \C}{G} \rightarrow S_\Q\,.
$$ and define leafwise cohomology groups $ \overline{H}^j(\cL_\chi)$, $0 \leq j \leq 2$. 

Then, imitating the proof of  our Theorem \ref{thm:traceram} and using the assumptions of Section 4,  we interpret  "formally"  the explicit formula \eqref{eq:EFchi} for the Dirichlet $L-$function $\Lambda (\chi , s)$
 as a leafwise Lefschetz trace formula for the vectors spaces  $ \overline{H}^j(\cL_\chi)$, $0 \leq j \leq 2$ :
see Theorem \ref{thm:Dir}. We insist on the fact that it is not known whether or not the assumptions of Section 4 are satisfied.

In Section 5, we assume that $K$ is a (finite) Galois extension of $\Q$ with Galois group $G$. We review the definition and properties of the Artin $L-$function $ \Lambda (K, \chi, s)$ associated to 
an irreducible representation $\rho: G \rightarrow GL_\C(V)$. We recall the standard explicit formula \eqref{eq:standard} for $ \Lambda (K, \chi, s)$: its spectral 
side involves the zeroes (with sign $-$) and the poles (with sign $+$). These poles are not controlled 
because the $\Gamma-$factor introduced in the definition \eqref{def:A} of $ \Lambda (K, \chi, s)$ is not 
associated to a mathematical structure, this $\Gamma-$factor seems to come as a parachute. This situation 
is in sharp contrast with the case of the Dirichlet $L-$functions recalled in Section 2: there the $\Gamma-$factor 
is introduced in the Dirichlet $L-$function in order to express it as the Mellin transform of a suitable theta function.

 Next we consider the ramified flat vector bundle over $S_\Q$:
$$
\cE_\rho= \frac{ S_K \times V}{G} \rightarrow S_\Q\,,
$$ and define leafwise cohomology groups $ \overline{H}^j(\cE_\rho)$, $0 \leq j \leq 2$.

Then, imitating the proof of  our Theorem \ref{thm:traceram} and using the axioms of Section 4,  we "prove  formally" a leafwise Atiyah-Bott-Lefschetz trace formula for the vectors spaces $ \overline{H}^j(\cE_\rho)$ ($0 \leq j \leq 2$) which provides an explicit formula
 \eqref{eq:Artin}. In this formula, the $\Gamma-$factor  of $ \Lambda (K, \chi, s)$ {\it appears naturally} in the computation 
 of the contribution of the fixed points.
 In the spectral side of \eqref{eq:Artin}, the numbers only appear with a sign $-$. Since the geometric sides of  \eqref{eq:Artin} and 
 \eqref{eq:standard} coincide, one then would get formally that the Artin $L-$function has no poles !!

Thus, the $ \overline{H}^j(\cE_\rho)$ ($0 \leq j \leq 2$) seem "to provide" a construction of  the cohomology groups 
that Deninger's cohomological formalism attributes to $ \Lambda (K, \chi, s)$.
In \cite{D 3b}[Section 3], working in the general cohomological formalism that he elaborated and using regularized determinants, Deninger has reduced the validity of Artin conjecture for simple motives to the vanishing 
of $H^0$ and $H^2$. Our approach via the trace formula is a bit different 
and possibly  simpler in the sense that it seems to need  less foundational results.

It would be interesting to confront certain ideas from automorphic theory (see eg \cite{Gelbart}, \cite{Lapid}, \cite{Taylor}) 
with the axioms of Sections 4.3. Indeed, one of the goals of Langlands programme is to identify
$ \Lambda (K, \chi, s)$ with the $L-$function $L_\pi$ of an automorphic cuspidal representation $\pi$. In the automorphic world, 
the $\Gamma-$factor of $L_\pi$ {\it appears naturally} in  a mathematical structure. Therefore, one may ask if the axioms 
of Section 4.3 and the data of $\rho: G \rightarrow GL_\C (V)$ could allow to construct " formally " the desired  automorphic 
cuspidal representation $\pi$ associated to $ \Lambda (K, \chi, s)$.

Ralf Meyer \cite{Meyer} has provided a nice and new spectral interpretation of the explicit formula \eqref{eq:EFK} (actually Meyer considers all 
Hecke $L-$functions at the same time). Unfortunately, the action of $G$ on Meyer's cohomology groups is trivial. The fact that this action is not trivial 
in our (conjectural) setting is guaranted by the axioms of Section 4.3.
\b

In another direction, it should also be interesting to confront the properties of the hypothetic  foliated space 
 $(S_K, \mathcal{F}, g , \phi^t)$ 
with ideas from Topos theory. For instance see Morin (\cite{Morin} and \cite{FM}),  Caramello \cite{Cara}, written talks by Laurent Lafforgue \cite{Lafforgue} and, Connes's lecture 2013 in 
Coll\`ege de France. In any case, we hope that this paper will be useful to somebody else.

\m
\noindent  {\bf Acknowledgements.} I am happy to thank David Harari and Georges Skandalis for having pointed out corrections, and also Vincent Lafforgue for a useful trick.
I am also happy to thank Christopher Deninger, Jesus Alvarez-Lopez and Bora Yalkinoglu for helpful comments.

\section{Deninger's Cohomological formalism in the case of the Dirichlet $L-$ functions.}  
\label{sect:zeta}$\;$

\subsection{Dirichlet $L-$ functions $\Lambda(\chi,s)$.} $\;$

\m

The (completed) Riemann zeta function is given by:
$$
\widehat{\zeta}(s) = 
 \pi^{-s/2} \Gamma(s/2) \, \prod_{p \in
{\mathbb{P}}} \,\frac{1}{1 - p^{-s}}
$$ 
where ${\mathbb{P}}=\{2,3,5,\ldots\}$ denotes the usual set of prime
numbers.  The following well known explicit formulas express a
connection between ${\mathbb{P}} \cup\{\infty\}$ and the zeroes of
$\widehat{\zeta}$. Let $\alpha \in C^\infty_{compact}(\R, \R)$ and for 
$s\in \R$, set $\Phi(s)= \int_{\R} e^{s t} \alpha(t) dt$; $\Phi$ belongs 
to the Schwartz class $\mathcal{S}(\R).$  Then one can prove
the following formula:
\begin{equation} 
\label{eq:EF}
\begin{aligned}[rcl]
\Phi(0)  & - \sum_{\rho \in \widehat{\zeta}^{-1}\{0\} ,\, \Re \rho \geq 0} \Phi(\rho) +
\Phi(1) = \\ & =\sum_{p \in \mathbb{P}} \log p \left( \sum_{k \geq 1}
\alpha(k \log p) \,+\, \sum_{k \leq -1} p^k \alpha(k \log p) \right) +
W_\infty(\alpha),
\end{aligned}
\end{equation}
where 
\[
W_\infty(\alpha) = \alpha(0) \log \pi + \int_0^{+\infty} \left (
\frac{\alpha(t) + e^{-t} \alpha(-t)} {1- e^{-2 t}} -\alpha(0) \frac
{e^{-2 t}} {t} \, \right) dt.
\]

Let $m\in \N \cap [3 , + \infty[$, a  Dirichlet character $\chi$ mod $m$ is a group homomorphism 
$$\chi : (\Z/ m \Z)^* \rightarrow S^1\, .
$$ Such a character is called primitive if there exists no non trivial divisor $m^\prime$ of $m$
such that  $\chi=  \chi^\prime \circ \pi$ where  $\chi^\prime$ is a Dirichlet character  mod  $m^\prime$ 
and $\pi: (\Z/ m \Z)^* \rightarrow (\Z/ m^\prime \Z)^*$ denotes the projection. The great commun divisor 
$f$ of all such divisors $m^\prime$ is called the conductor of $\chi$, one can check that 
$\chi$ is induced by a primitive Dirichlet character mod $f$.

A Dirichlet character $\chi $ mod $m$ induces a multiplicative map, still denoted $\chi$,
from $\Z$ to $S^1 \cup \{0\}$ by the rules:
$$
\forall n \in \Z,\; \chi (n)= \chi (n + m \Z) \; {\rm if}\, n \wedge m = 1,\; \chi (n) =0 \; {\rm if}\, n \wedge m \not= 1\,, \,\chi(0)=0\,.
$$

A Dirichlet character $\chi $ mod $m$ is primitive if and only if  for any non trivial divisor $m^\prime$ of $m$,
\begin{equation} \label{eq:prim}
\exists a \in \Z, \, a \wedge m =1,\, a = 1 \, {\rm mod}\, m^\prime ,\, \chi(a) \not= 1\,.
\end{equation}

Consider a (non trivial) primitive Dirichlet character $\chi$ mod $m$, define $q \in \{0,1\}$ 
by $\chi(-1) = (-1)^q$. Then the following function, first defined on the half plane 
$\Re s >1$,  extends as an entire holomorphic function on $\C$:
$$
\Lambda (\chi, s) =( \frac{m}{\pi})^{s/2} \Gamma ( \frac{s+q}{2}) \prod_{p \in {\mathbb{P}},\, p\wedge m= 1}\frac{1}{1-p^{-s}}\,.
$$

It satisfies the functional equation:
\begin{equation} \label{eq:eqf}
\forall s \in \C,\; \Lambda (\chi, s)= \frac{\sum_{a=0}^{m-1} \chi(a) e^{\frac{2 i \pi a}{m} }}{i^q \sqrt{m}} \Lambda (\overline{\chi}, 1-s)\,.
\end{equation}The proof uses  first the fact that $\Lambda (\chi, s)$ is the Mellin transform at $\frac{s+q}{2}$ of the theta function
$$
\theta(\chi , y) = \frac{1}{2}\, (\frac{\pi}{m})^{q/2}\,\sum_{n\in \Z} \chi(n) n^q e^{\frac{-n^2 \pi y}{m}}
$$ and then a certain relation between $\theta(\chi , 1/y)$ and $\theta(\overline{\chi} , y)$ which is established with the help of  the Gauss sums $\sum_{a=0}^{m-1} \chi(a) e^{\frac{2 i \pi a n}{m} }$ ($n \in \Z$).

Let $\alpha \in C^\infty_{compact}(\R, \R)$ such that $\alpha(0)=0$ and for 
$s \in \R$, set $\Phi(s)= \int_{\R} e^{s t} \alpha(t) dt .$ One then has:
\begin{equation} \label {eq:EFchi}
\begin{aligned}[rcl]
-\sum_{\rho \in \Lambda(\chi, \cdot)^{-1} \{0\}} \Phi(\rho)= &
\sum_{p \in \mathbb{P}, \, p\wedge m =1} \log p \sum_{n\geq 1} \bigl( \chi(p)^n \alpha( n\log p) + 
p^{-n} \chi(p)^{-n} \alpha(-n  \log  p) \bigr)  \\&
+ \int_{0}^{+\infty} \frac{\alpha (x) e^{-q x} + \alpha(-x) e^{- x (1+q)}}{1- e^{- 2 x}}\, d x\,.
\end{aligned}
\end{equation}

By comparison with \eqref{eq:EF}, "$\Phi(0) + \Phi(1)$ has disappeared", which means that $ \Lambda(\chi, \cdot)$ has no poles.

\bigskip
The idea of the proof of 
\eqref{eq:EFchi} is the following: apply the residue theorem to the integral of the function
$$
s\mapsto \left(\int_0^{+\infty} \alpha(\log t) \,t^s \frac{d\,t}{t}\right)\, \frac { \Lambda^\prime (\chi, s) } {  \Lambda (\chi, s)}$$
along the boundary of the rectangle of $\C$ defined by the four  points: 
$$
1+\epsilon +i T,\, -\epsilon +i T ,\,
-\epsilon -i T,\,1+\epsilon -i T,
$$ then use the functional equation  \eqref{eq:eqf}
and the formula:
$$
\frac {\Gamma^\prime } { \Gamma}(\frac {s}{2})=
\int_0^{+\infty} \left( \frac{e^{-u} }{u} - \frac { e^{-u\frac{s}{2} } }{1-e^{-u}}\right) du,
$$ lastly let $T$ goes to $+\infty$.


\b
\subsection{ Deninger's cohomological formalism.} $\;$

\b
Deninger's philosophy is motivated by the fact that
the left hand side of \eqref{eq:EF}
$$ \Phi(0)   - \sum_{\rho \in \widehat{\zeta}^{-1}\{0\},\, \Re \rho \geq 0} \Phi(\rho) +
\Phi(1)
$$ is reminiscent of a Lefschetz trace formula of the form
$$
{\rm TR}\, \int_\R \alpha(t) e^{t\, \Theta_0}\,d t - {\rm TR}\, \int_\R \alpha(t) e^{t\, \Theta_1}\, d t  + {\rm TR}\, \int_\R \alpha(t) e^{t\, \Theta_2}\, d t,
$$ where the following  two assumptions should be satisfied.

$\bullet$ 
$\Theta_0=0$ acts on $H^0=\R$, $\Theta_2={\rm Id}$ acts on $H^2=\R$.

$\bullet$$\bullet$ The closed unbounded operator,
$\Theta_1$ acts on an infinite dimensional real vector (pre-Hilbert) space $H^1$ and has discrete spectrum. 
For any  $ \alpha \in C^\infty_{compact} (\R , \R)$ the operator $ \int_\R \alpha(t) e^{t\, \Theta_1}\, d t $  is trace class. The 
eigenvalues of $\Theta_1 \otimes Id_\C$  acting on $H^1\otimes_\R \C$ coincide with the non trivial zeroes of  $\widehat{\zeta}$.

\medskip 
For each primitive (non trivial) Dirichlet character $\chi$, there should exist an infinite dimensional complex (pre-Hilbert) 
space $H_\chi^1$ endowed with a closed unbounded operator $\Theta_{1,\chi} : H_\chi^1 \rightarrow H_\chi^1$ 
such that 
 for any  $ \alpha \in C^\infty_{compact} (\R , \R)$ the operator $ \int_\R \alpha(t) e^{t\, \Theta_{1,\chi}}\, d t $  is trace class.
Moreover, the eigenvalues of $\Theta_{1,\chi}$ coincide with the non trivial zeroes of $L_\chi$.

In Deninger's approach one first assumes the existence of a Poincar\'e duality 
pairing: 
$$
H_\chi^1 \times H_{\overline{\chi}}^1 \rightarrow H^2
$$

$$
(\alpha, \beta) \rightarrow \alpha \cup \beta
$$ satisfying

\begin{equation} \label{eq:Poincare}
\forall ( \alpha, \beta) \in H_\chi^1\times H_{\overline{\chi}}^1,\; e^{t\, \Theta_{1,\chi}}\alpha \cup e^{t\, \Theta_{1,\overline{\chi}}}\beta= 
e^t (\alpha \cup \beta ),
\end{equation} where the $e^t$ is dictated by the fact that  $\Theta_2= Id$ on $H^2$.

In order to (re)prove the functional equation \eqref{eq:eqf}, one combines the property \eqref{eq:Poincare} and 
the following identity (which Deninger assumes to hold true):
$$
\Lambda (\chi, s) = C \, \frac{ det_\infty ( \frac{ s - \theta_{1, \chi}}{2 \pi}\,: H^1_\chi )}{ det_\infty ( \frac{ s - \theta_{1, \chi}}{2 \pi}\,: H^0_\chi )\, \, 
det_\infty ( \frac{ s - \theta_{1, \chi}}{2 \pi}\,: H^2_\chi ) }\; ,$$
where $C$ is a constant depending on a choice of conventions. (Actually in this particular case, the denominator is identically equal to $1$).

Second, one assumes the existence of  an anti-linear Hodge star $\star$ operator:
$$
\star\,: H_\chi^1 \rightarrow H_{\overline{\chi}}^1 ,\; \star\,: H_{\overline{\chi}}^1 \rightarrow H_\chi^1
$$ such that $\star^2 =Id$ and 
$ \star e^{t\, \Theta_{1, \chi}}  =  e^{t\, \Theta_{1, \overline{\chi}}} \star$
 and $< \alpha ; \beta >= \alpha \cup \star \beta$ defines a  scalar product (anti-linear on the right)
 on the  vector space $H_\chi^1.$ 
 
 These data imply easily the following:
 \begin{equation} \label{eq:hilb}
 \forall \alpha , \alpha^\prime \in H_\chi^1,\;
 < e^{t\, \Theta_{1, \chi}}\alpha ; e^{t\, \Theta_{1, \chi}}\alpha^\prime > =e^t <\alpha ; \alpha^\prime >.
\end{equation}
Therefore, 
$$\frac { d}{d t} < e^{t\, \Theta_{1,\chi}}\alpha ; e^{t\, \Theta_{1,\chi}}\alpha >_{t=0}
=<  \Theta_{1,\chi}( \alpha) ; \alpha > +
 <   \alpha ;  \Theta_{1, \chi} (\alpha) >=<\alpha ; \alpha >,
$$ 
and
$$
<  (\Theta_{1,\chi}-1/2)( \alpha) ; \alpha > +
 <   \alpha ; ( \Theta_{1,\chi}-1/2)( \alpha) >=0.
 $$
 Therefore, the
eigenvalues $s$
of $\Theta_{1, \chi}$ (which coincide by \eqref{eq:EFchi} with the non trivial zeroes 
of $\Lambda(\chi,s)$) satisfy $s -1/2 + \overline{s} -1/2 =0$ or equivalently:
$\Re s = \frac{1}{2}.$ Therefore 
Deninger's formalism should imply the Riemann hypothesis for $\Lambda(\chi,s)$!
This argument comes from an idea of Serre \cite{Serre} and has been 
formalized in the foliation case in \cite{D-S}.
Of course, we have described only a very small part of Deninger's formalism which 
deals also with $L-$functions of motives, Artin conjecture, Beilinson conjectures....etc.

\medskip

\begin{comment} \label{com:disym} There is a dissymmetry in \eqref{eq:EF} and in \eqref{eq:EFchi} between 
the coefficients of $\alpha (k \log p)$ and $\alpha (-k \log p)$ for 
$k\in \N^*.$ In the framework of Deninger's formalism 
the explanation is the following. Equation \eqref{eq:Poincare}   
 implies  "formally" that 
the transpose of $e^{t \Theta_{1, \chi}}$ is $e^t e^{-t \Theta_{1, \overline{\chi}}}$. Therefore, if 
we have a Lefschetz cohomological interpretation of \eqref{eq:EF} in Deninger's formalism
for a test function $\alpha$ with support in $]0, +\infty[$ then 
we have also a cohomological proof of \eqref{eq:EF}  for $\alpha$ with support in $]-\infty, 0[.$
In this formalism, \eqref{eq:Poincare} (and the above dissymmetry) is 
quite connected to the Riemann hypothesis.
\end{comment}
\medskip

Recall that  Alain Connes \cite{C 2} has reduced the validity of the Riemann 
hypothesis (for the $L-$functions of the Hecke characters) to a trace formula.


\section{Analogy with the foliation case.}  \label{sect:analogy}$\;$

\subsection{The Guillemin-Sternberg trace formula.} $\;$

Consider a smooth compact manifold $X$ with a smooth action:
$$
\phi: X \times \R \rightarrow X,\; (x,t) \rightarrow \phi^t(x),
$$ so that $\phi^{t+s}= \phi^t \circ \phi^{s}$ for any $t, s \in \R.$ 
Let $D_y \phi^t$ denote (for fixed $t\in \R$)  the differential 
of the map $: y\in X \rightarrow \phi^t(y).$ One has:
$ \partial_s \phi^{t+s}_{| s=0} (y) = D_y \phi^t  ( \partial_s \phi^s_{| s=0} (y)).$ 

Consider also a smooth vector bundle $E\rightarrow X.$ Assume 
that $E$ is endowed with a smooth family of maps 
$$
\psi^t: (\phi^t)^*E \rightarrow E, \, t \in \R,
$$ satisfying the following cocycle condition:
$$
\forall u \in C^\infty(X ; E),\, \forall t , s \in \R,\; 
\psi^{s}( \psi^t(u \circ \phi^t) \circ \phi^{s}) =
\psi^{t+s} (u \circ \phi^{t+s} ).
$$ In other words,  we require that  the maps $K^t : u \rightarrow \psi^{t} (u \circ \phi^{t} )=K^t (u)$ define an action 
of the additive group $\R$ on $C^\infty(X ; E).$
Notice that in the case of $E=\wedge^* T^* X$ and $\psi^s = ^{t}D \phi^s$ (the transpose 
of the differential $D \phi^s$ of $\phi^s$),
this condition is satisfied.

We shall assume that the graph of $\phi$ (i.e $\{ (x , \phi^t(x), t)\}$) meets transversally 
the "diagonal" $\{(x,x,t),\; x \in X,\, t \in \R
\}.$ Guillemin-Sternberg have checked (\cite{G-S}) that the trace 
$Tr ( K^t | C^\infty(X ; E) )$ is defined as a distribution of $t \in \R\setminus \{0\} $  by the formula:
$$
Tr ( K^t | C^\infty(X ; E) ) = \int_X K^t(x,x)
$$ where $K^t(x,y)$ denote Schwartz (density) kernel of $K^t.$ We warn the reader that, 
in general, for  $\alpha \in C^\infty_{compact} ( \R) \setminus \{0\}$, the operator $\int_\R \alpha(t) K^t d t$ is not trace class.

 Now, we give the name $T^0_x= \partial_t \phi^t(x)_{t=0}\,  \R$ to the real line 
generated by the vector field $ \partial_t \phi^t(x)_{t=0}$ of $\phi^t$ at a point $x$ 
where $ \partial_t \phi^t(x)_{t=0}\not=0.$  

\begin{proposition}(Guillemin-Sternberg, \cite{G-S} ) \label{prop:G-S}The following formula 
holds in $\mathcal{D}^\prime (\R\setminus \{0\}).$
$$
Tr ( K^t \, | \,  C^\infty(X ; E) ) = \sum_\gamma  l(\gamma) \sum_{k \in \Z^*}  
\frac{ Tr ( \psi^{k   l(\gamma)}_{x_\gamma} \, ; \, E_{x_\gamma} ) }{ 
| det \,( 1 - D_{y}  \phi^{k l(\gamma)}(x_\gamma) \, ;\,
 T_{x_\gamma} X / T^0_{x_\gamma} )\, |\,  }\delta_{k l(\gamma)} +
$$
$$
\sum_x \frac{ Tr ( \psi^{t}_{x} \, ; \, E_{x} ) }{ 
| det \,( 1 - D_{y}  \phi^{t}(x ) \, ; \,
 T_{x} X  )  | }.
$$ In the first sum, $\gamma $ runs over the periodic primitive orbits of $\phi^t$, 
$x_\gamma$ denotes any point of $\gamma$, $l(\gamma)$ is the length of 
$\gamma,$ $\phi^{l(\gamma)}( x_\gamma) =x_\gamma.$  In the second sum, $x$ 
runs over the fixed points of the flow: $\phi^t(x)=x$ for any $t \in \R.$

\end{proposition}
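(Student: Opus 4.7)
The plan is to view $\mathrm{Tr}(K^t\,|\,C^\infty(X;E))$ as the distributional pull-back to the time-fibered diagonal of the Schwartz kernel of the family $\{K^t\}_{t\in\R}$, and to evaluate it component by component on the intersection locus. Setting $n=\dim X$, the joint Schwartz kernel of the family is a distribution on $X\times X\times\R$ of the form $\psi^t_x\cdot\delta_{\Gamma_\phi}(x,y,t)$, supported on the $(n+1)$-dimensional graph submanifold
\[
\Gamma_\phi=\{(x,\phi^t(x),t):x\in X,\ t\in\R\}\subset X\times X\times\R.
\]
The transversality hypothesis says that $\Gamma_\phi$ meets the time-fibered diagonal $\Delta=\{(x,x,t):x\in X,\ t\in\R\}$ transversally, hence $\Gamma_\phi\cap\Delta$ is a smooth $1$-manifold whose connected components (away from $t=0$) are precisely the circles $\{(x,x,kl(\gamma)):x\in\gamma\}$ indexed by primitive periodic orbits $\gamma$ and $k\in\Z^*$, together with the lines $\{(x_0,x_0,t):t\in\R\}$ indexed by fixed points $x_0$ of the flow. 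The distributional trace on $\R\setminus\{0\}$ will split as a sum over these components.

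For a circle attached to $(\gamma,k)$, I would choose a small transverse slice $\Sigma$ to $\gamma$ at $x_\gamma$; the $k$-th iterate of the Poincar\'e first-return map $P_\gamma:\Sigma\to\Sigma$ has differential equal to the restriction of $D_y\phi^{kl(\gamma)}(x_\gamma)$ to the quotient $T_{x_\gamma}X/T^0_{x_\gamma}$. In coordinates $(s,y,\tau)$ with $s\in\R/l(\gamma)\Z$ along $\gamma$, $y\in\Sigma$ transverse, and $\tau\in\R$ near $kl(\gamma)$, I test the joint kernel against an $\alpha(\tau)\in C^\infty_c(\R\setminus\{0\})$. The constraint $x=\phi^\tau(x)$ combined with transversality allows the distributional change of variables $y\mapsto y-P_\gamma^k y+O(\tau-kl(\gamma))$, which produces the Jacobian factor $|\det(1-D_y\phi^{kl(\gamma)}(x_\gamma)\,|\,T_{x_\gamma}X/T^0_{x_\gamma})|^{-1}$; the $s$-integration contributes the primitive period $l(\gamma)$; the $\tau$-integration yields the mass $\delta_{kl(\gamma)}$; and fiberwise one recovers the trace of $\psi^{kl(\gamma)}_{x_\gamma}$. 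Assembling these factors reproduces the first sum in the statement.

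For a line attached to a fixed point $x_0$, one has $\phi^\tau(x_0)=x_0$ for every $\tau\in\R$, and transversality now guarantees $\det(1-D_y\phi^\tau(x_0))\neq 0$ for $\tau\neq 0$. A single-point change of variables $x\mapsto x-\phi^\tau(x)$ pulls back $\delta(x-\phi^\tau(x))$ to $\delta(x-x_0)/|\det(1-D_y\phi^\tau(x_0))|$, yielding the smooth contribution $\mathrm{tr}(\psi^\tau_{x_0})/|\det(1-D_y\phi^\tau(x_0)\,|\,T_{x_0}X)|$ to the distribution. Summing the circle and line contributions over all connected components of $\Gamma_\phi\cap\Delta$ reproduces the full formula.

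The main obstacle is making the distributional pull-back near the circles rigorous enough to produce the correct combinatorics: one must obtain the primitive period $l(\gamma)$ as an overall factor (not the iterated period $kl(\gamma)$) despite integrating near the iterated intersection circle, and the \emph{absolute value} of the transverse Jacobian (not a signed determinant). The cleanest route is through the clean-intersection calculus for Lagrangian/FIO kernels \`a la Duistermaat--Guillemin, applied to the conormal distribution along $\Gamma_\phi$; a direct Morse-lemma-style reduction on a tubular neighborhood of each intersection circle works equally well and is essentially the computation of Guillemin--Sternberg \cite{G-S}.
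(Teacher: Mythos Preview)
Your proposal is correct and follows the same route the paper indicates: the paper does not give a full proof but only records the key computational ingredient --- the change-of-variables identity $\delta_0(A\,\cdot)=\frac{1}{\mathrm{Jac}(A)}\,\delta_0(\cdot)$ for $A\in GL_n(\R)$ --- and refers to Guillemin--Sternberg \cite{G-S} for the rest. Your outline (graph/diagonal transversal intersection, decomposition into periodic-orbit circles and fixed-point lines, local change of variables producing the Jacobian factors and the primitive period $l(\gamma)$) is precisely how that ingredient is deployed in the standard argument, so there is nothing to correct.
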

\begin{comment} Recall that $D_y \phi^t$ denotes, for fixed $t$, the differential 
of the map $y(\in X) \rightarrow  \phi^t(y).$ The non vanishing of the two determinants 
in Proposition  \ref{prop:G-S} is equivalent to the fact that the graph of $\phi$ meets transversally 
the "diagonal" $\{(x,x,t),\; x \in X,\, t \in \R
\}.$
\end{comment}
Note that the following elementary observation is the main ingredient of the proof the
Proposition \ref{prop:G-S}.
Let $A \in GL_n (\R)$ and $\delta_0(\cdot)$ denote the Dirac mass at $0\in \R^n.$ 
Then one computes the distribution  $\delta_0(A \cdot)$ in the following way.
For any $f \in C^\infty_{comp}(\R^n),$ one has:
$$
< \delta_0 (A \cdot) ; f (\cdot) > =  \int_{\R^n} \delta_0(A x) f(x) d x=  
 \int_{\R^n} \delta_0( y) f( A^{-1} y)   \frac{1}{Jac (A)}d y = \frac{1}{Jac (A)}  f(0)
$$ where $d y$ denotes the Lebesgue measure.
Therefore: $\delta_0(A \cdot) =   \frac{1}{Jac (A)}  \delta_0 ( \cdot).$

\subsection{The Lesfchetz trace formula of Alvarez-Lopez and Kordyukov.} $\;$

\m

Now we shall assume that $X$ is a  compact three dimensional oriented  manifold and  endowed with
a codimension one  foliation $(X,\mathcal{F})$. We shall also assume that
 the flow 
 $\phi^t$ preserves the foliation    $(X,\mathcal{F})$, is transverse to 
 it and thus has no fixed point. Therefore  $(X,\mathcal{F})$  is a compact
 Riemannian foliation whose leaves are oriented. We shall apply later Proposition \ref{prop:G-S} 
 with $E=\wedge^* T^* \mathcal{F} \rightarrow X.$
 
 \begin{comment}
 A typical example is of the form
 $X= \frac{L \times \R^{+ *}}{ \Lambda}$, where $\Lambda$ a subgroup 
 of $(\R^{+*}, \times)$  and $\phi^t(l,x)= (l ,  x e^{-t})$. 
 \end{comment}
 
 Now, we get a so called bundle like metric $g_X$ on $(X,\mathcal{F})$ in the following 
 way. We require that $g_X( \partial_t \phi^t(z) )=1$,
  $ \partial_t \phi^t(z) \perp T \mathcal{F}$  for any $(t,z) \in \R \times X$,  and 
  that $(g_X)_{| T \mathcal{F}}$ is a given leafwise metric. By construction, 
with respect to $g_X$, 
  the foliation $(X,\mathcal{F})$ is defined locally by riemannian submersions.
  
  In this setting,  Alvarez-Lopez and Kordyukov \cite{A-K1} have proved the following 
  Hodge decomposition theorem ($0 \leq j \leq 2$):
  \begin{equation} \label{eq:Hodge}
  C^\infty(X, \wedge^j T^* \mathcal{F})= \ker \Delta^j_\tau \oplus \overline{ \Im \Delta^j_\tau}
  \end{equation}
   where $ \Delta^j_\tau$ denotes the leafwise Laplacian. Since we have 
   $ \frac{ \displaystyle \ker d_{\mathcal{F}}}{\overline{ \Im d_{\mathcal{F}}}}= \ker \Delta^j_\tau$, 
   we call the vector space  $H^j_\tau (X) =\ker \Delta^j_\tau $  a reduced 
   leafwise cohomology group.

   Let $\pi^j_\tau$ denote the projection of the vector space of leafwise differential forms $C^\infty(X, \wedge^j T^* \mathcal{F})$ onto  $H^j_\tau (X) =\ker \Delta^j_\tau $ according 
to \eqref{eq:Hodge}  with $0\leq j \leq 2$. Then  Alvarez-Lopez and 
Kordyukov  \cite{A-K2} have proved  the following Atiyah-Bott-Lefschetz trace formula.
\begin{theorem} \label{thm:AK} (\cite{A-K2}) Let $\alpha \in C^\infty_{compact}(\R).$ Then 
the operators 
$$
\int_\R \alpha(s) \,\pi^j_\tau\circ (\phi^s)^* \circ \pi^j_\tau \,d s
$$ are trace class for $0\leq j \leq 2$. Let $\chi_\Lambda$ denote the 
leafwise measured Connes Euler characteristic of $(X,\mathcal{F})$ (\cite{C 1}). Then one has:
\begin{equation} \label{eq:AK}
\sum_{j=0}^2 (-1)^j {\rm TR} \int_\R \alpha(s) \,\pi^j_\tau\circ (\phi^s)^* \circ \pi^j_\tau \,d s\,=
\chi_\Lambda \alpha(0) + \sum_{\gamma} \sum_{k \geq 1} l(\gamma)
\left( \, \epsilon_{-k \gamma} \alpha( -k l(\gamma) ) + \epsilon_{k \gamma} \alpha( k l(\gamma) )\, \right)
\end{equation} where 
$\gamma$ runs over the primitive closed orbits of $\phi^t$,  $l(\gamma)$ is the length of $\gamma$, 
$x_\gamma\in \gamma$ and $\epsilon_{\pm k \gamma}= {\rm sign} \, 
{\rm det} ({\rm id}- D\phi_{| {\rm T}_{x_\gamma} \mathcal{F}}^{\pm k l(\gamma)}).$ 
 
\end{theorem}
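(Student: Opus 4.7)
The plan is to derive \eqref{eq:AK} by applying the Guillemin--Sternberg distributional formula (Proposition \ref{prop:G-S}) to $(\phi^s)^*$ acting on $E = \wedge^* T^*\mathcal{F}$, then using the leafwise Hodge decomposition \eqref{eq:Hodge} together with a chain--homotopy argument to identify the resulting distributional supertrace with the supertrace on the harmonic summands $H^j_\tau(X)$ appearing on the LHS of \eqref{eq:AK}.

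First I would establish the trace class property of $\int_\R \alpha(s)\,\pi^j_\tau \circ (\phi^s)^* \circ \pi^j_\tau\,ds$. Although $\pi^j_\tau$ is only a bounded orthogonal projection onto the (possibly infinite-dimensional) space $\ker \Delta^j_\tau$, leafwise ellipticity of $\Delta^j_\tau$ (which commutes with $(\phi^s)^*$ because $\phi^s$ preserves $\mathcal{F}$ and the bundle-like metric) supplies smoothing along leaves, while transversality of $\phi^t$ to $\mathcal{F}$ combined with averaging against $\alpha$ supplies smoothing in the transverse direction; inserting a heat regularization $e^{-\varepsilon \Delta^j_\tau}$ in front of $\pi^j_\tau$ and letting $\varepsilon\downarrow 0$ is a natural way to make this rigorous. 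Next, the splitting \eqref{eq:Hodge} is $(\phi^s)^*$-invariant, and on the non-harmonic summand $\overline{\Im\,\Delta^j_\tau}$ the leafwise Green operator $(\Delta^j_\tau)^{-1}$ yields a $(\phi^s)^*$-equivariant chain contraction of the leafwise de Rham complex via $K=d_\mathcal{F}^*(\Delta_\tau)^{-1}$, satisfying $d_\mathcal{F}K + Kd_\mathcal{F} = \mathrm{Id}$ on the complement of the harmonic forms. A McKean--Singer style cancellation then shows that the non-harmonic summand contributes $0$ to the distributional supertrace, so the LHS of \eqref{eq:AK} coincides with the distributional supertrace of $\int_\R \alpha(s)(\phi^s)^*\,ds$ on the full space of leafwise forms.

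Then I would invoke Proposition \ref{prop:G-S} with $\psi^s$ the natural dual action of $D\phi^s|_{T\mathcal{F}}$ on $\wedge^* T^*\mathcal{F}$; the graph transversality hypothesis is equivalent to transversality of $\phi^t$ to $\mathcal{F}$, the absence of stationary points of the flow kills the second sum entirely, and at each primitive closed orbit $\gamma$ the flow direction coincides with $T^0_{x_\gamma}$, so $T_{x_\gamma}X / T^0_{x_\gamma} \cong T_{x_\gamma}\mathcal{F}$. The algebraic identity
$$\sum_{j=0}^{2}(-1)^j \mathrm{Tr}(\wedge^j B) = \det(1 - B)$$
applied with $B$ the induced action on $T^*_{x_\gamma}\mathcal{F}$ then reduces the Guillemin--Sternberg ratio at $(\gamma,k)$ to $l(\gamma)\,\mathrm{sign}\det(1 - D\phi^{kl(\gamma)}|_{T_{x_\gamma}\mathcal{F}})\,\delta_{kl(\gamma)} = l(\gamma)\epsilon_{k\gamma}\delta_{kl(\gamma)}$, producing the orbit sum on the RHS of \eqref{eq:AK}.

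The main obstacle, in my view, is the extension of the identity across $t = 0$. Proposition \ref{prop:G-S} provides only a distributional equality on $\R \setminus \{0\}$, whereas \eqref{eq:AK} tests arbitrary $\alpha \in C^\infty_{\mathrm{compact}}(\R)$; one must therefore show that the difference of the two sides is supported at $\{0\}$ and equals $\chi_\Lambda\,\delta_0$. This is an index-theoretic statement pairing small-time asymptotics of the leafwise heat kernel along the leaves with integration against the Connes holonomy-invariant transverse measure produced by the bundle-like metric, and it is plausibly the most delicate analytical step of \cite{A-K2}.
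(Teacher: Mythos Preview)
Your computation with Proposition \ref{prop:G-S} and the algebraic identity $\sum_j (-1)^j \mathrm{Tr}(\wedge^j B)=\det(1-B)$ is exactly what the paper does, and your identification of the $t=0$ contribution as a separate index-theoretic issue also matches the paper's sketch.

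However, there is a genuine gap in your reduction from the full space of leafwise forms to the harmonic summand. You assert that $\Delta^j_\tau$ commutes with $(\phi^s)^*$ ``because $\phi^s$ preserves $\mathcal{F}$ and the bundle-like metric'' and hence that the Hodge splitting \eqref{eq:Hodge} is $(\phi^s)^*$-invariant. This is not correct: the bundle-like metric $g_X$ is constructed so that the flow direction has unit length and is orthogonal to $T\mathcal{F}$, but the restriction $(g_X)_{|T\mathcal{F}}$ is an \emph{arbitrary} leafwise metric. Nothing forces $\phi^s$ to be a leafwise isometry (indeed the later discussion around \eqref{eq:Henri} contemplates $(\phi^t)^*g=e^t g$). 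Consequently $(\phi^s)^*$ commutes with $d_{\mathcal{F}}$ but in general not with $d_{\mathcal{F}}^*$, $\Delta^j_\tau$, the Green operator, or $\pi^j_\tau$; a harmonic form pulled back by $\phi^s$ is closed but need not be harmonic, so the splitting is not preserved and your ``$(\phi^s)^*$-equivariant chain contraction'' does not exist.

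The paper circumvents this precisely by \emph{not} separating harmonic from non-harmonic directly. It introduces the heat-regularized quantity
\[
H(t)=\sum_{j=0}^2(-1)^j\,\mathrm{TR}\int_\R \alpha(s)\,e^{-t\Delta^j_\tau}\circ(\phi^s)^*\,ds
\]
and shows $H(t)$ is independent of $t>0$; this McKean--Singer step only uses that $(\phi^s)^*$ commutes with $d_{\mathcal{F}}$ (so that $\mathrm{Str}\bigl(\Delta_\tau e^{-t\Delta_\tau}(\phi^s)^*\bigr)=0$ via the graded-commutator trick), never commutation with $\Delta_\tau$. Then $\lim_{t\to 0^+}H(t)$ recovers the Guillemin--Sternberg distributional supertrace you computed, while $\lim_{t\to+\infty}H(t)$ gives the LHS of \eqref{eq:AK}; establishing this last limit is where the Hodge decomposition \eqref{eq:Hodge} is actually used, and the paper flags it as requiring ``non trivial arguments''. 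Your proposal would be repaired by replacing the invariant-splitting claim with this heat interpolation.
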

\begin{proof} (Sketch of the idea).The case where the support of $\alpha$ is included in a suitably 
small interval $[-\epsilon, +\epsilon]$ is treated separately. So let us assume that the (compact) support of $\alpha$ is included 
in $\R\setminus\{0\}$.
The authors show that the following quantity:
$$
H(t)= \sum_{j=0}^2 (-1)^j {\rm TR} \int_\R \alpha(s) \,e^{- t \Delta^j_\tau} \circ (\phi^s)^*  \,d s\
$$ does not depend on  the real $t>0$.
 Using non trivial arguments 
based on \eqref{eq:Hodge}, the authors then prove that 
$$
\lim_{t\rightarrow + \infty} H(t)= \sum_{j=0}^2 (-1)^j {\rm TR} \int_\R \alpha(s) \,\pi^j_\tau\circ (\phi^s)^* \circ \pi^j_\tau \,d s\,.
$$ On the other hand, they show that:
$$
\lim_{t\rightarrow 0^+} H(t)=\sum_{j=0}^2 (-1)^j {\rm Tr} \int_\R \alpha(s)  (\phi^s)^*  \,d s.
$$ But Proposition \ref{prop:G-S} (with $E= \wedge^j T^* \mathcal{F}$ and $\psi^s = ^{t}D \phi^s$) shows that the right handside is equal to:
$$
\sum_{\gamma} \,l(\gamma)\sum_{k \in \Z^*} \sum_{j=0}^2 (-1)^j\,
\frac{ {\rm Tr} \bigl( ^{t}(D_y \phi^{ k l(\gamma)}(x_\gamma)): \wedge^j {\rm T}^*_{x_\gamma}  \mathcal{F} \mapsto \wedge^j {\rm T}^*_{x_\gamma} \mathcal{F} \bigr)}
 {|{\rm det}( {\rm id } - D_y \phi^{ k l(\gamma)}_{| {\rm T}_{x_\gamma} \mathcal{F}}) |} 
  \alpha({ k l(\gamma) }).
$$  Then, using the equality $\lim_{t\rightarrow + \infty} H(t)=  \lim_{t\rightarrow 0^+} H(t)$, one then gets immediately the result.
\end{proof}
\begin{comment} If there exists a real $h_\gamma >0$ such that $ h_\gamma D\phi^{l(\gamma)}_{| {\rm T}_{x_\gamma} \mathcal{F}}$ 
belongs to $S0_2({\rm T}_{x_\gamma} \mathcal{F} )$ (ie $D\phi^{l(\gamma)}_{| {\rm T}_{x_\gamma} \mathcal{F}}$ 
is a direct similitude) then  $\epsilon_{\pm k \gamma}= 1$ for any integer $k$.
\end{comment}

\begin{comment}
The extension of Theorem \ref{thm:AK} to the  case where the flow has fixed points is the subject of a work in progress \cite{AKL}.
\end{comment}


\subsection{ The case of a ramified flat line bundle on $(X, \cF, (\phi^t)_{t\in \R} )$. } $\;$

\m

We consider now another compact (three dimensional) oriented riemannian foliation $(\tilde{X}, \cF, \phi^t)$ of codimension $1$ which defines a Galois ramified covering $\pi : \tilde{X} \rightarrow X$
with {\it finite } automorphism group $G$ such that the action of $G$ 
commutes with $\phi^t, t \in \R$ and permutes the leaves.  We assume that  for any real $t$, $\phi^t \circ \pi = \pi \circ \phi^t$ and that $\pi$ sends leaves onto leaves. 
We can also assume that $G$ preserves a  bundlelike metric $g'$ of $(\tilde{X}, \cF, \phi^t)$ and we fix such one.

Consider now a non trivial character $\rho: G \rightarrow S^1$. Define an action of $G$ on $\tilde{X} \times \C$ 
by setting
$$
\forall (h, m , \lambda) \in G \times \tilde{X} \times \C,\; h\cdot ( m , \lambda)= (h\cdot m, \rho^{-1}(h) \lambda)\,.
$$
To this action we associate the ramified flat complex line bundle 
$\cL_\rho = \frac{\tilde{X} \times \C}{G} \rightarrow X$ over $X$, where
any $(m, \lambda) \in \tilde{X} \times \C$  is  identified to $(h\cdot m, \rho^{-1}(h) \lambda)$ for 
any $h\in G$.

One defines a projection $P_j$ acting on 
$H^j_\tau ( \tilde{X}) \otimes_\R \C$ ($0 \leq j \leq 2 $) by setting:
$$
P_j= \frac{1}{| G |} \sum_{h \in G} h^*\, , \, {\rm card}\, G = | G |\, .
$$  
\begin{definition} One then defines the leafwise cohomology group $H^j_\tau ( X ; \cL_\rho)$ 
with coefficient in $\cL_\rho$ by:
$$
H^j_\tau ( X ; \cL_\rho) = \Im P_j \, , 0 \leq j \leq 2\,.
$$
\end{definition}

Since the flow $(\phi^t)$ commutes with $G$, it induces an action denoted $(\phi^t)_j^*$ ($= \pi^j_\tau  (\phi^t)^*)$ on each $H^j_\tau ( X ; \cL_\rho) .$

The duality between $\cL_\rho$ and $\cL_{\overline{\rho}}$ induces a map:
$$
H^1_\tau ( X ; \cL_\rho) \times H^{1}_\tau ( X ; \cL_{\overline{\rho}}) \rightarrow H^2_\tau ( X )
$$
$$
(\alpha, \beta) \rightarrow \alpha \wedge \beta\,.
$$

The leaves of $(\tilde{X}, \cF)$ (and of $(X, \cF)$) are oriented. The restriction of the  $G-$invariant metric $g'$ 
along the leaves  then induces the  Hodge star operator:
$$
H^j_\tau ( X ; \cL_\rho) \rightarrow H^{2-j}_\tau ( X ; \cL_{\overline{\rho}}) 
$$
$$
\omega \rightarrow \overline{ \star \omega} \,.
$$ 	


Consider a closed orbit $\gamma$ in $X$ defined by $t\in [0, T] \rightarrow \phi^t(x_0)$ 
where $\phi^T(x_0)= x_0$. We shall say that this closed orbit is ramified if 
the cardinal of $\pi^{-1}(x_0)$ is strictly smaller than the cardinal of $G$. Since the action of $G$ commutes with 
one of $\phi^t (t\in \R)$,  this definition does not depend on the choice of $x_0 \in \gamma$.
We shall assume that there are only a finite number of closed ramified orbits.
Moreover, for any such ramified closed orbit, we shall make the following two assumptions:

$\bullet$ First,  if $\tilde{x_0} \in \pi^{-1} ( \{ x_0\})$
then the restriction of $\rho$ to $G_{\tilde{x_0}}= \{r \in G /\, r\cdot \tilde{x_0}= \tilde{x_0}\}$ 
is not trivial. 

$\bullet$ Second, if $T>0$ and $h \in G$ are such that $ \phi^T(\tilde{x_0}) =  h\cdot \tilde{x_0}$, then 
\begin{equation} \label{eq:signcte}
\forall r \in G_{\tilde{x_0}},\; 
 {\rm sign\, det}\, \bigl( Id - D ( h^{-1} \, r \circ \phi^T)\bigr)_{| T_{\tilde{x_0}} \mathcal{F}} = {\rm sign\, det}\, \bigl( Id - D (h^{-1} \circ \phi^T)\bigr)_{| T_{\tilde{x_0}} \mathcal{F}}
\,.
 \end{equation}

\smallskip
Consider now the case where $\gamma$ is an unramified closed orbit on $X$. Let $\tilde{x_0} \in \pi^{-1} ( \{ x_0\})$, 
there exists $h \in G$ such that $\phi^T ( \tilde{x_0}) = h\cdot \tilde{x_0}$. Then, for any $\lambda \in \C$,
$$
h^{-1} \cdot ( \phi^T ( \tilde{x_0}) , \lambda)= ( \tilde{x_0} ,  \rho(h) \lambda)\,.
$$ 
Then  the complex number $\rho(h)$ defines
the monodromy action along $\gamma$ on the flat line bundle $\cL_\rho$ and we denote it by $\rho(\gamma)$.

We then may state a leafwise Lefschetz trace formula (with coefficients in $\cL_\rho$) 
where, in analogy with the ramified primes of a Dirichlet character, the ramified closed orbits do not contribute at all.

\begin{theorem} \label{thm:traceram} Assume that for any $h \in G$,  the graph of $h \circ \phi^t$  
intersects transversally the "diagonal"  $\{( \tilde{x},  \tilde{x}, t) /  \; \tilde{x}\in \tilde{X} , \, t \in \R \}$. Let $\alpha \in C^\infty_{compact}(\R)$ be such that 
$\alpha(0)=0$. Then 
for each $0\leq j \leq 2$, the operator $\int_\R \alpha(s)  (\phi^s)_j^* \,d s$ acting on $H^j_\tau ( X ; \cL_\rho)$ 
is  trace class. 
 Moreover,  one has:
$$ 
\sum_{j=0}^2 (-1)^j {\rm TR} \int_\R \alpha(s) \, (\phi^s)_j^*  \,d s\,=
$$
$$
\sum_{\gamma} \sum_{k \geq 1} l(\gamma)
\bigl( \, \epsilon_{-k \gamma} \,\rho( -k \gamma) \,\alpha( -k l(\gamma) ) + \epsilon_{k \gamma}\, \rho( k \gamma)\, \alpha( k l(\gamma) )\, \bigr)
$$ where 
$\gamma$ runs over the primitive unramified closed orbits of $\phi^t$,  $l(\gamma)$ is the length of $\gamma$, 
$x_\gamma\in \gamma$ and $\epsilon_{\pm k \gamma}= {\rm sign} \, 
{\rm det} ({\rm id}- D\phi_{| {\rm T}_{x_\gamma} \mathcal{F}}^{\pm k l(\gamma)}).$ 

\end{theorem}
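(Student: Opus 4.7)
The plan is to lift the problem to the Galois cover $\tilde{X}$ and adapt the heat-kernel proof of Theorem \ref{thm:AK} equivariantly, using the projection
\[
P_{j,\rho}\;=\;\frac{1}{|G|}\sum_{h\in G}\rho(h)\,h^{*}
\]
onto the $G$-isotypic subspace of $H^{j}_\tau(\tilde{X})\otimes_{\mathbb{R}}\mathbb{C}$ that is identified with $H^{j}_\tau(X;\cL_\rho)$ by construction of the flat line bundle. Because the bundle-like metric $g'$ on $\tilde{X}$ is $G$-invariant, the leafwise Laplacian $\Delta^{j}_\tau$, its heat semigroup $e^{-t\Delta^{j}_\tau}$, and $(\phi^{s})^{*}$ all commute with the $G$-action and preserve the leafwise de Rham complex; so $P_{j,\rho}$ commutes with each of these operators.

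For $t>0$ define
\[
H_{\rho}(t)\;=\;\sum_{j=0}^{2}(-1)^{j}\,{\rm TR}_{C^{\infty}(\tilde{X};\,\wedge^{j}T^{*}\mathcal{F})}\int_{\mathbb{R}}\alpha(s)\,P_{j,\rho}\circ(\phi^{s})^{*}\circ e^{-t\Delta^{j}_\tau}\,ds.
\]
The integrand is smoothing for $t>0$ and therefore trace class. The supersymmetric cancellation arising from $\Delta^{j}_\tau=d_\mathcal{F} d_\mathcal{F}^{*}+d_\mathcal{F}^{*}d_\mathcal{F}$, combined with the fact that both $h^{*}$ and $(\phi^{s})^{*}$ commute with $d_\mathcal{F}$, shows $\frac{d}{dt}H_\rho(t)=0$. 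Letting $t\to+\infty$, $e^{-t\Delta^{j}_\tau}$ converges to $\pi^{j}_\tau$, so $H_\rho(t)$ tends to the left-hand side of the announced formula restricted to $H^{j}_\tau(X;\cL_\rho)$. Letting $t\to 0^{+}$, the argument of \cite{A-K2} applied inside each isotypic component (with $\alpha(0)=0$ killing the local diagonal contribution near $s=0$, including the Euler characteristic term at $h=e$) gives
\[
H_\rho(0^{+})\;=\;\frac{1}{|G|}\sum_{h\in G}\rho(h)\sum_{j=0}^{2}(-1)^{j}\,{\rm Tr}\int_{\mathbb{R}}\alpha(s)\,(h\circ\phi^{s})^{*}\,ds.
\]

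For each fixed $h\in G$ the inner trace is computed by Proposition \ref{prop:G-S} applied to the smooth family $s\mapsto h\circ\phi^{s}$; this is legitimate by the standing hypothesis that the graph of $h\circ\phi^{t}$ meets the diagonal transversally. A fixed point $(h\circ\phi^{s_{0}})(\tilde{x})=\tilde{x}$ projects via $\pi$ onto a primitive closed orbit $\gamma\subset X$ with $s_{0}=k\,l(\gamma)$, $k\in\mathbb{Z}^{*}$; fixing a lift $\tilde{x}_{0}$ and writing $\phi^{l(\gamma)}(\tilde{x}_{0})=h_{0}\,\tilde{x}_{0}$, the admissible $h$'s form the coset $h_{0}^{-k}\,G_{\tilde{x}_{0}}$. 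The fixed set along each such orbit is an entire $\phi$-orbit (because $h$ commutes with $\phi^{s}$), accounting for the $l(\gamma)$ factor in Proposition \ref{prop:G-S}. The alternating trace on $\wedge^{j}T^{*}\mathcal{F}$ converts the Guillemin--Sternberg numerator into $\det(\mathrm{id}-D(h\,\phi^{kl(\gamma)})|_{T_{\tilde{x}_{0}}\mathcal{F}})$, which cancels the absolute value in the denominator and leaves the sign $\epsilon_{k\gamma,r}$. After accounting for the combinatorics of $G$-orbits of lifts, the total contribution of $(\gamma,k)$ to $H_\rho(0^{+})$ is
\[
l(\gamma)\,\alpha\!\bigl(k\,l(\gamma)\bigr)\;\cdot\;\frac{1}{|G_{\tilde{x}_{0}}|}\sum_{r\in G_{\tilde{x}_{0}}}\rho(h_{0}^{-k}r)\,\epsilon_{k\gamma,r}.
\]

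The last step is the cancellation of the ramified orbits. For an \emph{unramified} $\gamma$ the stabilizer $G_{\tilde{x}_{0}}$ is trivial, so the displayed factor collapses to $\rho(h_{0}^{-k})\,\epsilon_{k\gamma}$, which matches the corresponding term on the right-hand side of the theorem under the monodromy convention $\rho(\gamma)=\rho(h_{0})$ stated just before the theorem (and similarly at negative $k$). For a \emph{ramified} $\gamma$, hypothesis \eqref{eq:signcte} asserts precisely that $\epsilon_{k\gamma,r}$ is independent of $r\in G_{\tilde{x}_{0}}$, so the sign factors out; by multiplicativity of $\rho$ the residual character sum becomes $\rho(h_{0}^{-k})\sum_{r\in G_{\tilde{x}_{0}}}\rho(r)$, which vanishes by orthogonality of characters, thanks to the hypothesis that $\rho|_{G_{\tilde{x}_{0}}}$ is non-trivial. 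Equating $H_\rho(0^{+})=\lim_{t\to+\infty}H_\rho(t)$ yields the stated formula. The principal obstacle is exactly this ramified cancellation: both hypotheses on ramified orbits are indispensable and must be used jointly, since the sign $\epsilon_{k\gamma,r}$ must first be pulled out of the sum via \eqref{eq:signcte} before orthogonality of characters can be invoked to eliminate $\sum_{r\in G_{\tilde{x}_{0}}}\rho(r)$.
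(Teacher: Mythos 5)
Your proposal follows essentially the same route as the paper's proof: average over $G$ against the character, run the heat-kernel argument from the proof of Theorem \ref{thm:AK} on $\tilde{X}$, evaluate the $t\to 0^{+}$ limit by applying Proposition \ref{prop:G-S} to each $h\circ\phi^{s}$, and cancel the ramified orbits by first factoring out the sign via \eqref{eq:signcte} and then using the vanishing of $\sum_{r\in G_{\tilde{x}_0}}\rho(r)$ for $\rho|_{G_{\tilde{x}_0}}$ non-trivial, exactly as in the paper (the only difference being that you organize the orbit combinatorics by the coset of admissible $h$ at a fixed lift rather than by the $G$-translates of one lift at fixed $h$). The lone blemish is a harmless convention mismatch ($\rho$ versus $\rho^{-1}$ in the isotypic projector), which makes your unramified term $\rho(h_0^{-k})$ rather than $\rho(k\gamma)=\rho(h_0)^{k}$; taking $P_{j,\rho}=\frac{1}{|G|}\sum_{h\in G}\overline{\rho(h)}\,h^{*}$ restores the stated formula.
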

\begin{proof} We  use Theorem \ref{thm:AK} or rather its proof to compute, the alternate 
sum of traces:
\begin{equation} \label{eq:alttr}
\sum_{j=0}^2 (-1)^j \frac{1}{| G|} \sum_{h\in G}\, {\rm TR}\, \int_\R \alpha(s) \, \pi^j_\tau \,(h^{-1}\circ \phi^s)^*  \,d s : H^j_\tau (\tilde{X})\otimes_\R \C \rightarrow H^j_\tau (\tilde{X}) \otimes_\R \C\,.
\end{equation} So we have to consider the reals $T\not=0$ and the points $\tilde{x}_0 \in \tilde{X}$ such that 
$h^{-1}\circ \phi^{T} ( \tilde{x}_0) = \tilde{x}_0 $. We shall assume $T>0$, the case 
$T<0$ being similar. By considering $\pi ( h^{-1}\circ \phi^{t} ( \tilde{x}_0) )$, one obtains
a closed orbit on $X$, $\gamma_{ \pi(\tilde{x}_0)} : t \mapsto \phi^t( \pi ( \tilde{x}_0))$ ($ 0 \leq t \leq T$) of length $T$. There exists 
$k \in \N^*$, such that $\gamma_{ \pi(\tilde{x}_0)}$ is the $k-$iteration of the {\it primitive} closed orbit 
of length $T_0 = \frac{T}{k}$ determined by $\phi^{T_0} ( \pi ( \tilde{x}_0)) = \pi ( \tilde{x}_0)$. Upstairs on $\tilde{X}$, this means that 
there exists $h_0\in G$ such that $h_0^{-1} \phi^{T_0} (  \tilde{x}_0 ) = \tilde{x}_0 $.  So $\phi^{ k T_0} (  \tilde{x}_0 ) =
h_0^k \cdot \tilde{x}_0 = h \cdot \tilde{x}_0$ and hence
$h^{-1}h_0^k \in G_{\tilde{x_0}}$. We then distinguish two cases.

A) The case where $\gamma_{ \pi(\tilde{x}_0)}$ is unramified.

 Then by considering the translates on the left of  
 $h^{-1} \circ \phi^t (  \tilde{x}_0 )$, one obtains  exactly the  $| G |$ curves (of the flow) on $\tilde{X}$ which correspond 
(via $\pi$) to $\gamma_{\pi( \tilde{x}_0)}$. We write them in the following way, $t \mapsto l h^{-1} l^{-1}\circ \phi^t ( l \cdot \tilde{x}_0 )$ 
( $ 0 \leq t \leq T$), where $l \in G$, the corresponding monodromy being $\rho(l h l^{-1})$. Observe that 
$l \cdot \tilde{x}_0$ is a fixed point of $ l h^{-1} l^{-1}\circ \phi^T$.
Then the proof of Theorem  \ref{thm:AK} (\cite{A-K2}) shows that the geometric contribution of $\gamma_{\pi( \tilde{x}_0)}$ to \eqref{eq:alttr} is computed according to Proposition \ref{prop:G-S} and is equal to:
$$
\frac{T_0}{| G|} \sum_{l \in G} \sum_{j=0}^2 (-1)^j \frac{ Tr\, \bigl( ^{t}D (l h^{-1} l^{-1}\circ \phi^T ) ( l\cdot \tilde{x_0})\, ;\, \wedge^j T^*_{l\cdot \tilde{x_0}} \mathcal{F}  \bigr)}
{| \,{\rm  det}\, \bigl( {\rm id} - D (l h^{-1} l^{-1}\circ \phi^T)\bigr)_{| T_{l\cdot \tilde{x_0}} \mathcal{F}}\, |}\,\,\rho(l h l^{-1}) \, \alpha( k T_0)\, .
$$
We observe that all the following  reals, where $l$ runs over $G$, have the same sign:
$$
{\rm  det}\, \bigl( {\rm id} - D (l h^{-1} l^{-1}\circ \phi^T)\bigr)_{| T_{l\cdot \tilde{x_0}} \mathcal{F}}, \; {\rm det} ({\rm id}- D\phi_{| {\rm T}_{\pi(\tilde{x}_0)} \mathcal{F}}^T)\,.
$$
Therefore, the previous expression is clearly equal to $ T_0 \,\epsilon_{k \gamma_{ \pi(\tilde{x}_0)}} \, \rho(h) \,\alpha( k T_0)$ which 
yields the desired contribution.

B) The case where $\gamma_{ \pi(\tilde{x}_0)}$ is ramified.

So $G_{\tilde{x_0}}= \{u \in G /\, u\cdot \tilde{x_0}= \tilde{x_0}\}$ 
is not trivial. Then there are exactly $| G/ G_{\tilde{x_0}}|$ curves (of the flow) upstairs on $\tilde{X}$ which correspond 
(via $\pi$) to $\gamma_{\pi( \tilde{x}_0)}$. 
They are given by $t \mapsto l_j h^{-1} l_j^{-1}\circ \phi^t ( l_j \cdot \tilde{x}_0 )$ 
( $ 0 \leq t \leq T$), where the $l_j$ ($1\leq j \leq m$) run over a system of representatives of cosets of 
$ G/ G_{\tilde{x_0}}$. Since the restriction of $\rho$ to $G_{\tilde{x_0}}$ is {\it not} trivial, we have to count
each such curve $| G_{\tilde{x_0}}|$ times but with (possibly) 
different monodromies (ie action on the line factor $\C$). More precisely,  for each representative $l_j$  the curve labeled
\begin{equation} \label{eq:rami}
t \mapsto   l_j h^{-1} l_j^{-1} u\circ \phi^t ( l_j \cdot \tilde{x}_0 ),\; {\rm with}\, u \in l_j G_{\tilde{x_0}} l_j^{-1}
\end{equation} has monodromy $\rho(u^{-1} l_j h l_j^{-1} )$.

Thanks to the sign assumption \eqref{eq:signcte}, the proof given above in the unramified case shows that the sum of the contributions
to \eqref{eq:alttr} of the $|l_j G_{\tilde{x_0}} l_j^{-1} |$ curves in \eqref{eq:rami} is then equal to: 
$$
C \sum_{u \in l_j G_{\tilde{x_0}} l_j^{-1}} \rho(u^{-1} l_j h l_j^{-1} ) = C  \bigl( \,\sum_{s \in  G_{\tilde{x_0}} } \rho(  s) \, \bigr)\, \rho(  h) \,,
$$ where $C$ is a suitable constant.
But since the restriction of $\rho$ to $G_{\tilde{x_0}}$ is assumed to be not trivial, this contribution is zero.

\end{proof}

\m
\subsection{An explicit example.}$\;$

\m

Now we describe an explicit example (communicated to us by Jesus Alvarez-Lopez) of a ramified covering
satisfying the conditions of Theorem \ref{thm:traceram}. Denote by  $\mathcal{S}$
the Jacob ladder, a certain noncompact surface embedded in $\R^3$, and by $L$ the real line 
of symmetry of $\mathcal{S}$. There is a group $\mathcal{T}$ of translations isomorphic to 
$(\Z , +)$, whose vectors belong to $L$ and which acts on $\mathcal{S}$ such that  
the quotient $\mathcal{S}/\mathcal{T}$ is a smooth compact Riemann surface. Let $G= \{ Id, R\} $ denote the group 
generated by the rotation $R$ of $\R^3$ whose axis is $L$ and angle is $\pi$. 
Observe that we have a ramified covering $\mathcal{S}\rightarrow \mathcal{S}/G$ where the set of ramification points 
is $L \cap \mathcal{S}$. Moreover, there exists a vector field $U$ on $\mathcal{S}$ whose fixed points are exactly 
the ones of the $G-$action and which is invariant by $G$ and $\mathcal{T}$. Consider an action 
of $\mathcal{T} \simeq (\Z, +)$ on the circle $S^1$ defined by a rotation of angle $2 \pi \alpha$ ($\alpha \notin \Q$).
Now, set $\tilde{X}=\displaystyle \frac{\mathcal{S}\times S^1}{\mathcal{T}}$, it is foliated by the leaves induced by the sets $ \mathcal{S}\times \{ e^{i \theta}\}$.
Consider $\phi^t$ 
the flow of the vector field $U \times \frac{\partial}{\partial_x}$ of $\tilde{X}$. Then we can choose $U$ such that 
the hypothesis of Theorem \ref{thm:traceram} are satisfied by $\tilde{X}, (\phi^t)_{t \in \R},  X= \tilde{X}/G$.

\b


%

\b

\subsection{The more general case of a flat ramified complex vector bundle.}$\;$


\m

More generally, one can consider a unitary representation $\rho : G \rightarrow U( E) $
where $E$ is a complex hermitian vector space.  
One then gets the ramified flat complex hermitian vector bundle 
$$\cE_\rho = \frac{\tilde{X} \times E}{G} \rightarrow X
$$ over $X$ where $(m,v)$ is identified with $(h\cdot m, \rho(h)^{-1} v)$ for any $h \in G$. Similarly, 
the dual representation $^{t}\rho^{-1}: G \rightarrow U(E^*)$ allows 
to consider the dual flat complex hermitian vector space $\cE^*_{^{t}\rho^{-1}} \rightarrow X$.

The duality between $\cE_\rho$ and $\cE^*_{^{t}\rho^{-1}}$ induces a map:
$$
H^1_\tau ( X ; \cE_\rho) \times H^{1}_\tau ( X ; \cE^*_{^{t}\rho^{-1}}) \rightarrow H^2_\tau ( X )
$$
$$
(\alpha, \beta) \rightarrow \alpha \wedge \beta\,.
$$

 Denote by  $J:\, \cE_\rho \rightarrow \cE^*_{^{t}\rho^{-1}}$ 
the antilinear vector bundle isomorphism
provided by the hermitian scalar product.  Then, using leafwise Hodge star of the  metric $g'$, 
one gets the following  Hodge star operator acting on the cohomology groups:
$$
H^j_\tau ( X ; \cE_\rho) \rightarrow H^{2-j}_\tau ( X ; \cE^*_{^{t}\rho^{-1}}) 
$$
$$
\omega \rightarrow {J \star \omega} \,.
$$ 	



%

 \section{Remarks about a conjectural dynamical laminated foliated space $ (S_K, \mathcal{F}, g, \phi^t)$
associated to the Dedekind zeta function $\widehat{\zeta}_K $.}    \label{sect:Riemann} $\;$
 
 \m
 
 Much of the following Section is speculative in nature. It should be viewed as a working programme or a motivation 
 for developing interesting mathematics.
 
 Let $K$ be a number field and let $\mathcal{O}_K$ denote its ring of integers. Let $r_1$ (resp. $2 r_2$)
 denote the number of real  (resp. complex) embeddings of $K$ so that 
 the dimension of $K$ as a $\Q-$vector space is equal to $r_1 + 2 r_2$. If $\sigma: K \rightarrow \C$ is  a complex 
 embedding then of course $ | \sigma (z) | $ and $ | \overline{\sigma} (z) | $ define the same archimedean absolute 
 value on $K$. Therefore the set $S_\infty$ of all the  archimedean absolute values of $K$ has exactly $r_1+r_2$ elements.
 
 We now set:
 $$
 \Gamma_\R(s)= \pi^{-s/2} \Gamma (s/2) ,\; \Gamma_\C(s)= (2\pi)^{-s} \Gamma(s)\,.
 $$
 
 The Dedekind zeta function  $\widehat{\zeta}_K$ is defined for $\Re s >1$ by:
 $$
 \widehat{\zeta}_K (s) = | d_K |^{s/2} \,\Gamma_\R^{r_1}(s)\, \Gamma_\C^{r_2}(s)\,
 \prod \frac{1}{1- (N \mathcal{P})^{-s}}\, ,
 $$ where $d_K$  denotes the discriminant of $K$ over $\Q$, $ \mathcal{P}$ runs over the set of non zero prime ideals 
of $\mathcal{O}_K$ and $ N \mathcal{P}$ ($=$ card $  \mathcal{O}_K/ \mathcal{P} $) denotes the norm of $\mathcal{P}.$ 

The function $\widehat{\zeta}_K$ 
extends as a meromorphic function on $\C$ and admits a simple pole at $0$ and $1$. It  satisfies the functional 
equation $\widehat{\zeta}_K (s) = \widehat{\zeta}_K(1-s)$.

We recall the  explicit formula for the zeta function  $\widehat{\zeta}_K$  as an equality
between two distributions in $\mathcal{D}^\prime (\R\setminus \{0\})$ ($t$ being 
the real variable).
\begin{equation} \label{eq:EFK}
\begin{aligned}[rcl]
1 - & \sum_{\rho \in \widehat{\zeta}_K^{-1}\{0\}, \; \Re \rho \geq 0} 
e^{ t \rho} + e^t= 
\sum_{{\mathcal{P}} }\log N \mathcal{P}  \sum_{k\geq 1} (
\delta_{k \log N \mathcal{P}} + (N \mathcal{P})^{-k} \delta_{-k \log N \mathcal{P}} ) \\&
+ r_1 \bigl( \frac{1}{1-e^{-2t}} \,1_{\{t>0\}} + \frac{e^t}{1-e^{2t}} \,1_{\{t<0\}} \bigr)+ r_2 \bigl( \frac{1}{1-e^{-t}} \,1_{\{t>0\}} + \frac{e^t}{1-e^{t}} \,1_{\{t<0\}} \bigr),
\end{aligned}
\end{equation}
 where $ \mathcal{P}$ runs over the set of prime ideals 
of $\mathcal{O}_K$ and $ N \mathcal{P}$  denotes the norm of $\mathcal{P}.$

 \subsection{ Structural assumptions and their consequences.}$\;$
 
 \m
 
 We assume,  following Deninger (eg \cite{D 1c}, \cite{D 2b}),  that to $Spec \,\mathcal{O}_K \cup S_\infty$, one can associate a
  (laminated) foliated space 
$ (S_K, \mathcal{F}, g, \phi^t)$ satisfying the following assumptions.

{\bf 1].} The leaves are Riemann surfaces, the path connected components
of $S_K$ are three dimensional and ,
 $g$ denotes a leafwise riemannian 
metric. 
The flow  $(\phi^t) _{t\in\R}$  acts on 
$ (S_K, \mathcal{F})$, it sends  leaves into  (other) leaves and its  graph  
intersects transversally the "diagonal"  $\{( \tilde{x},  \tilde{x}, t) /  \; \tilde{x}\in S_K , \, t \in \R \}$.

{\bf 2].} 
To each prime ideal $\mathcal {P}$ of $ \mathcal{O}_K$ there corresponds a unique primitive closed 
orbit $\gamma_\mathcal {P}$ of $\phi^t$ of length $\log N \mathcal {P}.$ 
There is a bijection between  the set $S_\infty$ of archimedean absolute values   
and the set of fixed point $y_\infty=\phi^t(y_\infty), \forall  t\in \R,$ of the 
flow. Each leaf contains at most one fixed point and the flow is transverse to all the leaves different from the 
ones containing the $r_1+ r_2$ fixed points.

{\bf 3] a).} 

 We  assume  that for any fixed point $y_\infty$:
\begin{equation} \label{eq:SO} 
\forall t \in \R,\; 
e^{-t/2} D_y \phi^t(y_\infty)_{| T_{y_\infty}  \mathcal{F}} \in  
SO_2 ( T_{y_\infty}  \mathcal{F} )\,.
\end{equation}
 
 {\bf  3] b).} For any prime $\mathcal {P}$ of $ \mathcal{O}_K$  and any $\tilde{x} \in \gamma_\mathcal {P}$:
 
\begin{equation} \label{eq:SOb} 
 \displaystyle e^{- \frac{\log N\mathcal {P}}{2} } D_y \phi^{\log N\mathcal {P} }(\tilde{x} )_{| T_{\tilde{x}}  \mathcal{F}} \in  
SO_2 ( T_{\tilde{x}}  \mathcal{F} )\,.
\end{equation}
\m

{\bf 4].}  We have (Frechet) reduced  real leafwise cohomology groups $\overline{H}^j_{\mathcal {F} , K}$ 
($0\leq j \leq 2$), on which $(\phi^t) _{t\in\R}$ acts naturally, 
 with the following properties. One has 
 $\overline{H}^0_{\mathcal {F} , K}\simeq \R$ (the space of constant functions) and $\overline{H}^2_{\mathcal {F} , K} \simeq \R [\lambda_g]$ where $[\lambda_g]$ denote the class in  $\overline{H}^2_{\mathcal {F} , K}$ 
of the leafwise kaehler  metric  $ \lambda_g$  associated to $g.$
Moreover, we assume that 
\begin{equation} \label{eq:Henri}
\forall t \in \R,\; (\phi^t)^* ([\lambda_g])= e^t [\lambda_g]\,,
\end{equation} and that $\overline{H}^1_{\mathcal {F} , K}$ 
is infinite dimensional.

{\bf 5]} The action of $\phi^t$ on $\overline{H}^1_{\mathcal {F} , K}$ commutes 
with the Hodge star $\star$ induced by $g.$ Moreover there exists a transverse 
measure $\mu$ on $(S_K, \mathcal{F})$ such that 
$\int_{S_K} ( \alpha \wedge \star \beta ) \mu $ defines 
a scalar product on $\overline{H}^1_{\mathcal {F} , K}$

{\bf 6].}
For any $\alpha \in C^\infty_{compact}(\R \setminus \{0\} ; \R)$,
$\int_\R \alpha(t) (\phi^t)^* d t$ acting on $\overline{H}^1_{\mathcal {F} , K}$  
is trace class (possibly in some generalized sense, cf \cite{AKL}). 
 The
explicit formula \eqref{eq:EFK} is  interpreted as an Atiyah-Bott-Lefschetz trace formula for 
the   foliated space 
$ (S_K, \mathcal{F}, g, \phi^t)$  with respect to the leafwise 
cohomology groups $\overline{H}^j_{\mathcal {F} , K}$ 
($0\leq j \leq 2$). In particular, the infinitesimal generator $\theta_1$ 
of $(\phi^t)_{t\in \R}$ acting on $\overline{H}^1_{\mathcal {F} , K}\otimes \C$ has discrete spectrum, its set of
eigenvalues coincide with the set of zeroes of  $\widehat{\zeta}_K (s) $ (with the same multiplicities on each side).
Moreover:
$$
\overline{H}^1_{\mathcal {F} , K}\otimes_\R \C = \overline{ \sum_{z_q \in \widehat{\zeta}_K^{-1}\{0\} }
\ker (\theta_1 - z_q Id)^{n(z_q)} } \,.
$$

{\bf 7].} Let  $x_\infty \in S_K$  be any fixed point corresponding (according to 2]) to 
a real archimedan absolute value. Then $x_\infty$ should be a limit point 
of a trajectory $\gamma_\infty:$ $\lim_{t \rightarrow +\infty} \phi^t(y) =x_\infty$
for any $y \in \gamma_\infty.$ Moreover, $\gamma_\infty$ should 
have the following orbifold structure. Define an orbifold structure 
on $\R^{\geq 0}$ by requiring the following map to be 
an orbifold isomorphism:
$$
Sq: \frac{\R}{\{ 1,  -1\}} \rightarrow \R^{\geq 0},\; Sq(z)=z^2.
$$ Notice that $Sq$ transforms the flow 
$\phi^t_{ \frac{\R}{\{ 1,  -1\}} }(z)= z e^{-t}$ into 
the flow $\phi^t_{\R^{\geq 0}}(v)= v e^{-2 t}.$ 
Then we require that there exists an embedding 
$\Psi:  \R^{\geq 0} \rightarrow \gamma_{\infty} $ 
such that $\Psi (0)=x_\infty$ and  
\begin{equation} \label{eq:orb}
\forall (t, v)\in \R \times\R^{\geq 0},\; \Psi (\phi^t_{\R^{\geq 0}}(v)=v e^{-2 t } )= \phi^t( \Psi(v)).
\end{equation} 
Lastly we require that $\gamma_\infty$ is transverse at $x_\infty$ to 
$T_{x_\infty} \mathcal{F}.$ 

\m

{\bf 8].}  Let $z_\infty 
\in S_K$ be any fixed point corresponding (according to 2]) to a complex archimedean absolute value. Then there exist  two trajectories $\gamma_\pm$ 
of the flow $\phi^t$ with end point $z_\infty.$
For any $z_\pm \in \gamma_\pm$, $\lim_{t \rightarrow +\infty} \phi^t(z_\pm)= z_\infty.$ These two 
trajectories $\gamma_\pm$ are transverse to $\mathcal{F}$ at 
$z_\infty $. Moreover there exists an embedding:
$$
\Psi: \R \rightarrow \gamma_- \cup \gamma_+,
$$ such that $\Psi(0)= z_\infty$, $\gamma_\pm \setminus \{0\}=
\Psi( \R^\pm \setminus \{0\}).$ Lastly, $\forall v,t \in \R, $
$\Psi ( v e^{-t})= \phi^t( \Psi(v)).$

\bigskip

\begin{comment} The stronger assumption $ \forall t \in \R,\, (\phi^t)^* (g)= e^t g$ implies 
\eqref{eq:SO} (because $\phi^0= Id$), \eqref{eq:Henri} and the fact that 
$\phi^t$ commutes with the Hodge star not only  on $\overline{H}^1_{\mathcal {F} , K}$ but also
on the vector space of leafwise differential $1-$forms.  Deninger 
told us privately that this assumption  $(\phi^t)^* (g)= e^t g$ might be too strong.
Assumption 5] and  \eqref{eq:Henri} implies the analogue of Equation \eqref{eq:hilb} for $\widehat{\zeta}_K$ in 
Deninger's formalism.  Therefore, the first six Assumptions imply 
the Riemann hypothesis  for $\widehat{\zeta}_K$ as explained in Section 2 !.  
\end{comment}
\begin{comment}   The disymmetry mentionned in Comment \ref{com:disym} might be explained in the following way.
 For each prime ideal $\mathcal{P}$ of $\mathcal{O}_K$ with norm $p^f$, 
   $(S_K, \mathcal{F})$ should 
  exhibit a transversal of the type $]0,1[ \times \Z_p$ and possibly 
  the  ring of finite Adeles $\A_\Q$ might enter into the picture. See \cite{El1} for a simple case. 
\end{comment}

\subsection{Remarks about the contribution of the archimedean places in \eqref{eq:EFK}.}$\;$ 

\bigskip

Now we apply formally  the Guillemin-Sternberg trace formula 
for the distribution of the real variable $t$:
\begin{equation} \label{eq:GSK}
\sum_{j=0}^2 (-1)^j Tr(  (\phi^t)^*\, ; \, \Gamma (S_K\, ; \,  \wedge^j T^* \mathcal{F}) )
\end{equation} where $ \Gamma (S_K\, ; \,  \wedge^j T^* \mathcal{F})$ denotes 
the set of  "smooth" sections. 
\begin{lemma} \label{lem:cont} (Deninger \cite{D 2b}) {\item 1]} The contribution of a fixed point $y_\infty$  corresponding 
to an archimedean place of $K$
in the Guillemin-Sternberg trace formula for \eqref{eq:GSK} is:
$$
 \frac{1}{| det ( 1 -D_y \phi^t(y_\infty) \, ; \, T_{y_\infty}  S_K/T_{y_\infty} \mathcal{F}) | }.
$$
{\item 2]}  In the case of a fixed  point $x_\infty$  corresponding 
to a real  archimedean place of $K$ one has:
$$
 \forall t \in \R\setminus \{0\},\;  \frac{1}{| det ( 1 -D_y \phi^t(x_\infty) \, ; \, T_{x_\infty}  S_K/T_{x_\infty} \mathcal{F}) | } = \frac{1}{| 1-e^{-2 t} |}
 $$
 {\item 3]}  In the case of a fixed  point $z_\infty$  corresponding 
to a complex  archimedean place of $K$ one has:
$$
 \forall t \in \R\setminus \{0\},\;  \frac{1}{| det ( 1 -D_y \phi^t(z_\infty) \, ; \, T_{z_\infty}  S_K/T_{z_\infty} \mathcal{F}) | } = \frac{1}{| 1-e^{- t} |}
 $$
\end{lemma}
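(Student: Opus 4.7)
The plan is to treat part 1] as the main computation and then read off parts 2] and 3] from the explicit transverse models in assumptions 7] and 8]. For part 1], the Guillemin-Sternberg formula (Proposition \ref{prop:G-S}) applied with $E = \wedge^j T^* \mathcal{F}$ and $\psi^s = {}^t D\phi^s$ yields, at a fixed point $y_\infty$, the local contribution
\[
\sum_{j=0}^{2}(-1)^j\,\frac{\mathrm{Tr}\bigl(\wedge^{j}{}^{t}D\phi^{t}(y_\infty)\,;\,\wedge^{j}T^{*}_{y_\infty}\mathcal{F}\bigr)}{|\det(1-D\phi^{t}(y_\infty)\,;\,T_{y_\infty}S_{K})|}.
\]
Using the standard identity $\sum_j (-1)^j \mathrm{Tr}(\wedge^j A) = \det(1-A)$ applied to $A = D\phi^{t}(y_\infty)_{|T_{y_\infty}\mathcal{F}}$, the numerator collapses to $\det\bigl(1-D\phi^{t}(y_\infty)_{|T_{y_\infty}\mathcal{F}}\bigr)$. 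Because $\phi^t$ preserves the foliation, $D\phi^t(y_\infty)$ is block-triangular with respect to the splitting $T_{y_\infty}S_K \supset T_{y_\infty}\mathcal{F}$, so the denominator factors as
\[
|\det(1-D\phi^{t}(y_\infty);T_{y_\infty}\mathcal{F})|\cdot|\det(1-D\phi^{t}(y_\infty);T_{y_\infty}S_{K}/T_{y_\infty}\mathcal{F})|.
\]
Hence the ratio reduces to $\operatorname{sign}\det(1-D\phi^{t}|_{T_{y_\infty}\mathcal{F}})/|\det(1-D\phi^{t};T_{y_\infty}S_{K}/T_{y_\infty}\mathcal{F})|$.

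To get rid of the sign, I invoke assumption 3]a), which writes $D\phi^t(y_\infty)_{|T_{y_\infty}\mathcal{F}} = e^{t/2} R_\theta$ for some rotation $R_\theta \in SO_2$. A direct computation in an orthonormal basis gives
\[
\det\bigl(1-e^{t/2}R_{\theta}\bigr)=(e^{t/2}-\cos\theta)^{2}+\sin^{2}\theta\geq 0,
\]
with equality only at $t=0$. So the sign equals $+1$ for $t\neq 0$, finishing part 1].

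For part 2], the embedding $\Psi:\R^{\geq 0}\to \gamma_\infty$ of assumption 7] satisfies $\Psi(v e^{-2t})=\phi^{t}(\Psi(v))$ and $\gamma_\infty$ is transverse to $\mathcal{F}$ at $x_\infty$, so $\Psi$ gives a smooth coordinate on the one-dimensional transverse space $T_{x_\infty}S_K/T_{x_\infty}\mathcal{F}$. Differentiating $v\mapsto v e^{-2t}$ at $v=0$ shows $D\phi^t$ acts on the transverse direction by multiplication by $e^{-2t}$, whence $|\det(1-D\phi^t;T_{x_\infty}S_K/T_{x_\infty}\mathcal{F})|=|1-e^{-2t}|$. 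Part 3] is identical in structure: assumption 8] provides $\Psi:\R\to\gamma_-\cup\gamma_+$ with $\Psi(ve^{-t})=\phi^t(\Psi(v))$, giving transverse action $e^{-t}$ and therefore $|1-e^{-t}|$.

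The main subtlety I expect is the sign analysis in part 1]: without assumption 3]a), the sign $\operatorname{sign}\det(1-D\phi^t|_{T_{y_\infty}\mathcal{F}})$ could oscillate with $t$ and spoil the clean formula. The $SO_2$-conformality hypothesis is precisely what forces the leafwise factor $(e^{t/2}-\cos\theta)^2+\sin^2\theta$ to be a sum of squares and thus to stay nonnegative; this is the key conceptual input. Everything else is a transparent unwinding of the orbifold/transverse models in 7] and 8], with the distinction $e^{-2t}$ vs $e^{-t}$ mirroring the archimedean $\Gamma_\R$ vs $\Gamma_\C$ factors in the definition of $\widehat{\zeta}_K(s)$.
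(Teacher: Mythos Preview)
Your proof is correct and follows essentially the same route as the paper's own argument: apply Proposition~\ref{prop:G-S} at the fixed point, use $\sum_j(-1)^j\mathrm{Tr}(\wedge^j A)=\det(1-A)$ to collapse the numerator, factor the denominator via the leafwise/transverse splitting, and invoke assumption~3]a) to kill the sign; then read off the transverse eigenvalue $e^{-2t}$ (resp.\ $e^{-t}$) from the explicit models in assumptions~7] and~8]. The only cosmetic difference is that you spell out the sign computation $(e^{t/2}-\cos\theta)^2+\sin^2\theta\geq 0$ explicitly, whereas the paper merely asserts ``one checks easily'' that the ratio equals~$1$.
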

\begin{proof} 1]
Using Proposition \ref{prop:G-S}, one sees that the contribution of the  fixed point $y_\infty$  
is equal to:
$$
\frac{ \sum_{j=0}^2 (-1)^j Tr(  (D_y \phi^t)^*(y_\infty)\, ;\, \wedge^j T_{y_\infty} ^* \mathcal{F} )}{
| det ( 1 -D_y \phi^t(y_\infty) \, ; \, T_{y_\infty} S_K ) | }=
$$
$$
 \frac{ det ( 1 -D_y \phi^t(y_\infty) \, ; \, T_{y_\infty}  \mathcal{F})  }{| det ( 1 -D_y \phi^t(y_\infty) \, ; \, T_{y_\infty}  \mathcal{F} ) |} \, \frac{1}{| det ( 1 -D_y \phi^t(y_\infty) \, ; \, T_{y_\infty}  S_K/T_{y_\infty} \mathcal{F}) | }.
$$
Using property 
 \eqref{eq:SO} one checks easily that 
 $$
  \frac{ det ( 1 -D_y \phi^t(y_\infty) \, ; \, T_{y_\infty}  \mathcal{F})  }{| det ( 1 -D_y \phi^t(y_\infty) \, ; \, T_{y_\infty}  \mathcal{F} ) |} =1.
 $$ One then gets immediately 1].

 \noindent 2] Since $T_{x_\infty}  S_K/T_{x_\infty} \mathcal{F} $ is a real line, there exists $\kappa \in \R$ 
such that:
$$
\forall t \in \R,\;  {| det ( 1 -D_y \phi^t(x_\infty) \, ; \, T_{x_\infty}  S_K/T_{x_\infty} \mathcal{F}) | }= 
{|1 - e^{\kappa t} |}.
$$
By Assumption 7], $\gamma_\infty$ is transverse at $x_\infty$ to 
$T_{x_\infty} \mathcal{F}$ and  \eqref{eq:orb} shows that 
$D_y \phi^t(x_\infty)$ acts 
as 
$e^{- 2 t} \,Id$ on the real line $   T_{x_\infty}  S_K/T_{x_\infty} \mathcal{F}$. One then gets 
2] immediately. One proves 3] in the same way, using Assumption 8].

\end{proof} 

 Recall that  we wish to test the interpretation of \eqref{eq:EFK} as a Lefschetz trace formula via 
the Guillemin-Sternberg formula. Part 2] of the next Proposition is a priori embarrassing...

\begin{proposition} (Deninger \cite{D 2b})\label{prop:not}
{\item 1]} The contribution 
of a real archimedean absolute value in \eqref{eq:EFK} 
coincides  {\it for any real   positive $t$} with the contribution of the corresponding fixed 
point  $x_\infty$ in the Guillemin-Sternberg formula for \eqref{eq:GSK}.

{\item 2]} The contributions of the fixed point 
$x_\infty$ {\it for $t$ real negative} in the Guillemin-Sternberg formula for \eqref{eq:GSK}
and of the corresponding real archimedean absolute value in  \eqref{eq:EFK} do {\it  not} coincide. 
(This riddle will be resolved in Section 4.4).
\end{proposition}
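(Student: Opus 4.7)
The plan is to combine Lemma \ref{lem:cont}(2) with a direct case split on the sign of $t$, comparing the two explicit expressions.

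First I would invoke Lemma \ref{lem:cont}(2), which, under assumptions 3]a) and 7], identifies the Guillemin--Sternberg contribution of the fixed point $x_\infty$ corresponding to a real archimedean place with $\frac{1}{|1-e^{-2t}|}$, valid for all $t \in \R \setminus\{0\}$. On the other hand, from \eqref{eq:EFK} one reads off that the contribution of a single real archimedean absolute value is
$$
\frac{1}{1-e^{-2t}}\,1_{\{t>0\}} + \frac{e^t}{1-e^{2t}}\,1_{\{t<0\}}.
$$
So the whole proposition reduces to comparing these two closed-form expressions on each half-line.

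For part 1], I would simply note that when $t>0$ one has $1-e^{-2t}>0$, hence $|1-e^{-2t}|=1-e^{-2t}$ and both expressions equal $\frac{1}{1-e^{-2t}}$, establishing the coincidence.

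For part 2], I would compute both sides for $t<0$. On the Guillemin--Sternberg side, $e^{-2t}>1$ so $|1-e^{-2t}|=e^{-2t}-1$ and the contribution is $\frac{1}{e^{-2t}-1}$. On the explicit-formula side, rewriting yields
$$
\frac{e^t}{1-e^{2t}} \;=\; \frac{e^t}{e^{2t}(e^{-2t}-1)} \;=\; \frac{e^{-t}}{e^{-2t}-1}.
$$
These two differ by the nontrivial factor $e^{-t}$ (which equals $1$ only at $t=0$), so they disagree on every $t<0$. This proves the stated discrepancy, which is precisely the ``riddle'' that assumption 7] on the orbifold structure at $x_\infty$ will later be used to resolve (in Section 4.4). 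No single step here presents an essential obstacle: once Lemma \ref{lem:cont} is in hand, the proposition is an elementary algebraic comparison, and the only delicate point is bookkeeping the absolute values on each half-line.
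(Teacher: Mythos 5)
Your proposal is correct and essentially identical to the paper's proof: part 1] is just Lemma \ref{lem:cont}(2) read on the half-line $t>0$, and part 2] is the same elementary comparison, the paper writing the Guillemin--Sternberg contribution for $t<0$ as $\frac{e^{2t}}{1-e^{2t}}$ against $\frac{e^{t}}{1-e^{2t}}$ from \eqref{eq:EFK}, which is exactly the factor-of-$e^{t}$ mismatch you exhibit. Only your closing aside is slightly off: assumption 7] is already what produces the $e^{-2t}$ in Lemma \ref{lem:cont}(2), and the riddle is resolved in Section 4.4 not by 7] alone but by viewing $S_\Q$ as the quotient of $S_{\Q[i]}$ and averaging the trace over the covering group $G=\{Id,h_1\}$.
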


\begin{proof} 1] This is part 2] of the previous Lemma.

\noindent 2] Indeed, 
the Guillemin-Sternberg formula gives 
$$
\frac{1}{| 1- e^{-2 t} |}= \frac{e^{2 t}}{ 1- e^{2 t} },
$$ whereas \eqref{eq:EFK} gives $ \displaystyle \frac{e^{t}}{ 1- e^{2 t} }$ for $t<0.$


\end{proof} 
The following Proposition 
shows that the contribution of a {\it complex } archimedean place in 
the explicit formula \eqref{eq:EFK} is better understood than the one of a {\it real} archimedean place 
(cf Prop. \ref{prop:not}. 2]).

\begin{proposition} ([Section 5]\cite{D 2b}) \label{prop:=}
 Let $z_\infty$ be a fixed point corresponding to a complex archimedean 
place of $K$.  The contribution of $z_\infty$ in the Guillemin-Sternberg 
trace formula (for \eqref{eq:GSK}) coincides, for $t\in \R\setminus \{0\},$ with 
the contribution of the corresponding complex archimedean place in \eqref{eq:EFK}.

\end{proposition}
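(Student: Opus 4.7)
The plan is to apply part 3] of Lemma \ref{lem:cont}, which already computes the Jacobian factor arising from the normal bundle $T_{z_\infty} S_K/T_{z_\infty}\mathcal{F}$ at the fixed point, and then to do a simple case analysis on the sign of $t$ and match, on the nose, with the complex archimedean terms appearing on the right hand side of \eqref{eq:EFK}.

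More concretely, I would proceed as follows. First, by part 1] of Lemma \ref{lem:cont}, the contribution of $z_\infty$ in the Guillemin--Sternberg formula for \eqref{eq:GSK} equals
\[
\frac{1}{| \det ( 1 - D_y \phi^t(z_\infty) \, ; \, T_{z_\infty}  S_K/T_{z_\infty} \mathcal{F}) | },
\]
because Assumption 3] a) (together with the same computation as in the proof of part 1] of Lemma \ref{lem:cont}) ensures that the leafwise $\wedge^* T^*\mathcal{F}$ alternating sum reduces to $+1$. Then by part 3] of Lemma \ref{lem:cont}, this equals $\frac{1}{|1 - e^{-t}|}$ for every real $t\neq 0$.

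Second, I would split according to the sign of $t$ and compare with the complex archimedean term in \eqref{eq:EFK}, which for a single complex place reads
\[
\frac{1}{1-e^{-t}} \, 1_{\{t>0\}} + \frac{e^t}{1-e^{t}} \, 1_{\{t<0\}}.
\]
For $t>0$, we have $1-e^{-t}>0$, so $\frac{1}{|1-e^{-t}|} = \frac{1}{1-e^{-t}}$, matching the first term. For $t<0$, we have $e^{-t}>1$, so $|1-e^{-t}| = e^{-t}-1$, and multiplying numerator and denominator by $e^t$ yields
\[
\frac{1}{|1-e^{-t}|} \;=\; \frac{1}{e^{-t}-1} \;=\; \frac{e^t}{1-e^t},
\]
which matches the second term. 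Hence both sides agree as distributions on $\R\setminus\{0\}$.

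There is really no hard step here: the content is entirely in Assumption 8] (used to identify the normal eigenvalue as $e^{-t}$ acting on a complex line, i.e.\ as $e^{-t}\,\mathrm{Id}$ on a real plane giving the determinant $(1-e^{-t})^2$ up to absolute value, which simplifies to $|1-e^{-t}|$ after taking the modulus of a complex determinant, cf.\ part 3] of Lemma \ref{lem:cont}) and in the sign bookkeeping for $t<0$. The contrast with Proposition \ref{prop:not} 2] is precisely that in the real case, Assumption 7] forces the factor $e^{-2t}$ instead of $e^{-t}$, producing the $|1-e^{-2t}|$ which disagrees with the $|1-e^{-t}|$ appearing in \eqref{eq:EFK} for $t<0$; in the complex case the exponent $-t$ is the same on both sides and the only subtlety is the absolute value sign, which is harmless.
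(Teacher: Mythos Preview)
Your proof is correct and follows exactly the paper's approach: the paper's proof is the single sentence ``This is an easy consequence of Lemma \ref{lem:cont}. 3].'' You have simply spelled out the sign bookkeeping for $t>0$ and $t<0$ that the paper leaves implicit.

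One small correction to your parenthetical aside: the normal space $T_{z_\infty}S_K/T_{z_\infty}\mathcal{F}$ is a \emph{real line}, not a complex line or real plane. Assumption 1] says the path-connected components of $S_K$ are three-dimensional and the leaves are Riemann surfaces, so the transverse direction is one-dimensional. Assumption 8] provides the embedding $\Psi:\R\to\gamma_-\cup\gamma_+$ with $\Psi(ve^{-t})=\phi^t(\Psi(v))$, so $D_y\phi^t(z_\infty)$ acts on this real line as multiplication by $e^{-t}$, and hence $\det(1-D_y\phi^t(z_\infty);T_{z_\infty}S_K/T_{z_\infty}\mathcal{F})=1-e^{-t}$ directly. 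There is no square $(1-e^{-t})^2$ and no ``modulus of a complex determinant'' involved. This does not affect your main argument, since you correctly cite Lemma \ref{lem:cont}. 3] for the result $\frac{1}{|1-e^{-t}|}$; only your explanatory gloss on that lemma is off.
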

\begin{proof} This is an easy consequence of Lemma \ref{lem:cont}. 3]. 

\end{proof}
\medskip
\medskip

\subsection{ More precise assumptions when  $K $ is a Galois extension of $ \Q$ with Galois group $G$.}$\;$
\medskip

In this subsection we assume that $K$ is a Galois extension of $\Q$ 
of degree $n$ with Galois group $G$. 
We then require the existence of a ramified Galois covering map
$\pi_K: S_K \rightarrow S_\Q$ whose automorphism group  coincides with $G$ and which  {\it satisfies the following properties}. 

For any real $t$, $\phi^t \circ \pi_K = \pi_K \circ \phi^t$, the map $\pi_K$ [resp. $G$] sends leaves to leaves. The leafwise 
metric $g$ is assumed to be $G-$invariant
 and, 
the actions of $G$ and $\phi^t \, (t\in \R) $ on $S_K$ and $\overline{H}^1_{\mathcal {F} , K}$ commute.
 Moreover the action of $G$
on the set of  curves $\gamma_{\mathcal{P}}$ coincides with the action of $G$ on the set of (non zero) prime ideals 
$\mathcal{P}$. More precisely, consider a prime number $p$ and the decomposition of 
$p \mathcal{O}_K$ in prime ideals:
$$
p \mathcal{O}_K = \mathcal{P}_1^e \ldots \mathcal{P}_r^e\,.
$$ Therefore, $n= e \,r\,f$ where $ N \mathcal{P}_j = p^f$ for $j\in \{1, \ldots, r\}$.

Consider $D_{\mathcal{P}_j}= \{h \in G /\, h \cdot \mathcal{P}_j= \mathcal{P}_j \}$ \footnote{ When $G$ is abelian, the $D_{\mathcal{P}_j}$ are all equal for $1\leq j \leq r$.} ,  
we then have a natural surjective homomorphism:
\begin{equation} \label{eq:Fr}
\begin{aligned}[rcl]
&
\Theta_j: D_{\mathcal{P}_j} \rightarrow {\rm Aut} \, \frac{ \mathcal{O}_K}{\mathcal{P}_j}\\&
h \mapsto \Theta_j(h)= ({\rm Fr\,})^{a_j(h)}\,
\end{aligned}
\end{equation} where ${\rm Fr\,}$ denote the Frobenius automorphim
$z \mapsto z^p$ of $\frac{ \mathcal{O}_K}{\mathcal{P}_j}$ and $a_j(h)$ is a suitable integer (modulo $f$). 
One has $ | D_{\mathcal{P}_j} | = e f$. The fact that 
$\Theta_j$ is natural means that
\begin{equation}\label{eq:Nat}
\forall (h  , v) \in G \times \mathcal{O}_K ,\; \; h\cdot v  - v^{p a_j(h)}  \in \mathcal{P}_j\,.
\end{equation}
Recall that $e$ is the common cardinal of the inertia groups $I_{\mathcal{P}_j} = \ker \Theta_j$ and that $e\geq 2$ if and only if 
$p$ divides the discriminant of $K$. 
We then require that   each point of $\gamma_{\mathcal{P}_j}$ 
is fixed by  $I_{\mathcal{P}_j}$ and that $\pi_K$ induces 
the  covering map
\begin{equation} \begin{aligned}[rcl]
& \pi_K: \, \gamma_{\mathcal{P}_j} \simeq \frac{\R}{f \log p\, \Z} \rightarrow 
\gamma_p \simeq \frac{\R}{ \log p\, \Z} \\ &
x \mapsto x\,.
\end{aligned}
\end{equation} 

Moreover, we require the following three properties:

\begin{equation}\label{eq:galoiscov}
\forall h \in D_{\mathcal{P}_j},\; \,\forall x \in \gamma_{\mathcal{P}_j} \simeq \frac{\R}{f \log p\, \Z},\; \;h\cdot x = x + a_j(h) \log p\, .
\end{equation} 

\noindent The restriction of $\phi^t$ to $\gamma_{\mathcal{P}_j} \simeq \frac{\R}{f \log p\, \Z}$ (resp. $\gamma_p $) is assumed to be the translation by $t$: $\phi^t(x)= x+ t$.
Lastly we   state a strengthening of Assumption 3] a):
\begin{equation} \label{eq:strength}
{\rm if}\, h^{-1} \phi^T( \tilde{x})=\tilde{x} \,\, {\rm for}\,\, \tilde{x}\in S_K,\, h\in G,\; 
{\rm then} \, D( h^{-1} \phi^T)(\tilde{x})_{| T_{\tilde{x}} \mathcal{F}} \in \R^{+*} SO_2( T_{\tilde{x}} \mathcal{F} )\,.
\end{equation}
\medskip

Now we state the required conditions for the archimedean places of $K$. Observe that 
if $| \, |$ is such a place then all the other archimedean places of $K$ are of the 
form $ z \rightarrow | h(z) |$ where $h$ runs over $G$. Therefore either they are all 
real or all complex.
If they are all real, then $r_1=n$ and the group $G$ acts freely and transitively on the set $S_\infty$
 of archimedean places of $K$. We then require that the action of $G$ on $S_\infty$ coincides 
 with the one of $G$ on the set of corresponding fixed points $\{x_{1,\infty},\, \ldots , x_{n,\infty} \}$.

 If the archimedean places are all complex, then $n= 2 r_2$.  We require that the transitive action of $G$ on $S_\infty$ coincides with the (transitive) action of $G$ on the set of corresponding fixed points $\{z_{1,\infty},\,  \ldots , z_{r_{2},\infty} \}$.  
 For each $j \in \{1,\ldots , r_{2}\}$ there exists a unique element $h_j \in G\setminus \{1\}$  such that 
 $h_j(z_{j,\infty} ) = z_{j,\infty} $. The $h_j$ are all conjugate to each other and satisfy $h_j^2=1$, where $j \in \{1,\ldots , r_{2}\}$.
 In some sense each $h_j$ represents a non canonical model of the complex conjugation.
  Moreover we assume that for any 
 $j \in \{1,\ldots , r_{2}\}$
 \begin{equation} \label{eq:fixes}
 D h_j( z_{j,\infty} )_{| T_{z_{j,\infty}} \mathcal{F}}\, =\, Id,\;\;  D h_j( z_{j,\infty} )_{| \frac{T_{z_{j,\infty}} S_K }{T_{z_{j,\infty}} \mathcal{F}}}\, = \,-Id 
 \end{equation}
\medskip
\subsection{Explanation of the   incompatibility at the archimedean place of $\Q$ between the 
Guillemin-Sternberg trace formula and the explicit formula \eqref{eq:EF}. } $\,$

\bigskip
A priori, Proposition \ref{prop:not}. 2] seems to raise an "objection" in Deninger's approach.
We are going to explain that  the 
Guillemin-Sternberg trace formula and the explicit formula \eqref{eq:EF} {\it are actually compatible. }
Proposition \ref{prop:not}. 2]  is due to the fact 
that $S_\Q$ has a mild singularity at $x_\infty$, whereas  $S_{\Q[i ]}$ is "smooth" at $z_\infty$.

We apply  the previous subsection with $K= \Q[i]$. Thus we require the existence of a degree two ramified covering $\pi_{\Q[i]}: S_{\Q[i ]} \rightarrow S_{\Q}$
with structural group $G=\{ Id, h_1\}$ such that $h_1(z_\infty)= z_\infty$, 
$D h_1(z_\infty)_{| {\rm T}_{z_\infty} \mathcal{F}} = Id_{ { {\rm T}_{z_\infty} \mathcal{F}}}$,
$D h_1 (z_\infty)$ induces $-Id$ on $T_{z_\infty} S_{\Q[i ]}/T_{z_\infty}\mathcal{F}$ and 
$h_1 \circ \phi^t=\phi^t \circ h_1$ for any real $t$.

Then we have (at least formally) the following equality between  distributions of the real variable $t$:

$$
\sum_{j=0}^2 (-1)^j Tr\Big(  (\phi^t)^*\, ; \, \Gamma (S_\Q\, ; \,  \wedge^j T^* \mathcal{F}) \Big) =
$$
$$
\sum_{j=0}^2 (-1)^j Tr\Big( \frac{  (\phi^t)^* + h_1^* (\phi^t)^* }{2} \, ; \, \Gamma (S_{\Q[i]}\, ; \,  \wedge^j T^* \mathcal{F}) \Big) \, = B(t)\,.
$$ We observe that $z_\infty$ is also a fixed point for  $ h_1 \circ \phi^t= \phi^t \circ h_1$, $t\in \R$.
\begin{proposition} \label{prop:OK}

 The contribution of $z_\infty$ in the Guillemin-Sternberg trace formula 
for the term $B(t)$ above is equal to:
$$
\frac{1}{1-e^{-2t}} \; \;{\rm if}\, t>0\,, 
$$ and to
$$
\frac{e^t}{1-e^{2t}}\;\; {\rm if}\, t<0\,.
$$ This contribution matches perfectly with the contribution of the real archimedean place 
in the explicit formula  \eqref{eq:EF}.
\end{proposition}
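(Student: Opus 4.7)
The plan is to apply the Guillemin--Sternberg trace formula (Proposition~\ref{prop:G-S}) separately to the two diffeomorphisms $\phi^t$ and $\phi^t \circ h_1$ of $S_{\Q[i ]}$, noting that $h_1^*(\phi^t)^* = (\phi^t \circ h_1)^*$, and then halving the sum. Both maps fix $z_\infty$: for $\phi^t$ this is Assumption~2]; for $\phi^t \circ h_1$ it follows from $h_1(z_\infty) = z_\infty$ together with the commutation $h_1 \circ \phi^t = \phi^t \circ h_1$. Consequently $z_\infty$ contributes a fixed-point term to each of the two trace formulas, and I need only compute and average these two contributions.

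To evaluate them, I would decompose $T_{z_\infty} S_{\Q[i ]} = T_{z_\infty}\mathcal{F} \oplus N_{z_\infty}$ with $N_{z_\infty} := T_{z_\infty} S_{\Q[i ]}/T_{z_\infty}\mathcal{F}$ and read off the differentials. Since $z_\infty$ is the fixed point attached to the unique complex archimedean place of $\Q[i]$, Assumption~8] yields $D\phi^t|_{N_{z_\infty}} = e^{-t}$, while the strengthened Assumption~3]~a), namely \eqref{eq:strength}, forces $D\phi^t|_{T_{z_\infty}\mathcal{F}}$ to be a direct similitude of ratio $e^{t/2}$. Combined with the hypotheses $Dh_1|_{T_{z_\infty}\mathcal{F}} = Id$ and $Dh_1|_{N_{z_\infty}} = -Id$ of the Proposition, this gives $D(\phi^t \circ h_1)|_{T_{z_\infty}\mathcal{F}} = D\phi^t|_{T_{z_\infty}\mathcal{F}}$ and $D(\phi^t \circ h_1)|_{N_{z_\infty}} = -e^{-t}$.

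Exactly as in the proof of Lemma~\ref{lem:cont}.~1], the alternating sum over $j$ of the traces on $\wedge^j T^*_{z_\infty}\mathcal{F}$ factors out the leafwise piece $\det(1 - D\cdot|_{T\mathcal{F}})/|\det(1 - D\cdot|_{T\mathcal{F}})|$, and this ratio equals $+1$ for $t \neq 0$ because $\det(1 - e^{t/2} R_\theta) = 1 - 2 e^{t/2}\cos\theta + e^t > 0$ whenever $t \neq 0$, where $R_\theta$ is the rotation part of the similitude. Thus the $\phi^t$-contribution is $1/|1 - e^{-t}|$ and the $(\phi^t \circ h_1)$-contribution is $1/(1 + e^{-t})$, so that $z_\infty$ contributes to $B(t)$ the quantity
\[
\frac{1}{2}\left(\frac{1}{|1 - e^{-t}|} + \frac{1}{1 + e^{-t}}\right).
\]
A one-line algebraic simplification (separating the cases $t>0$ and $t<0$ to remove the absolute value) rewrites this as $1/(1 - e^{-2t})$ for $t > 0$ and as $e^t/(1 - e^{2t})$ for $t < 0$, matching exactly the $r_1 = 1$, $r_2 = 0$ archimedean contribution in \eqref{eq:EF} for $\Q$.

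There is no real technical obstacle here---the core is a two-by-two linear algebra calculation followed by an elementary sum of fractions. The substance of the Proposition is geometric: the ``mild singularity'' at $x_\infty \in S_\Q$ postulated in Assumption~7] is nothing but the $h_1$-quotient of the \emph{smooth} fixed point $z_\infty \in S_{\Q[i ]}$, whose transverse $h_1$-action is $-Id$. The factor $2$ in the denominator $1 - e^{-2t}$ is then produced mechanically by the averaging over $\{Id, h_1\}$ on the smooth cover, which is precisely what reconciles Guillemin--Sternberg with the explicit formula \eqref{eq:EF} at the real archimedean place of $\Q$ and resolves the riddle of Proposition~\ref{prop:not}.~2].
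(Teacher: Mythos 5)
Your proof is correct and follows essentially the same route as the paper: both average the Guillemin--Sternberg fixed-point contributions of $\phi^t$ and $h_1\circ\phi^t$ at $z_\infty$, cancel the leafwise factor to $+1$ via the similitude/rotation hypotheses and \eqref{eq:fixes}, and read off the normal factors $|1-e^{-t}|$ and $1+e^{-t}$ from Assumption 8] and $Dh_1|_{N_{z_\infty}}=-Id$ before the elementary averaging. The only cosmetic slip is attributing the ratio $e^{t/2}$ of the leafwise similitude to \eqref{eq:strength} rather than to \eqref{eq:SO}; either assumption suffices for the sign argument, so this does not affect the proof.
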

\begin{proof} The axioms of Section 4.3  (recalled in the beginning of this subsection) and the proof of Lemma \ref{lem:cont} allow to check formally that the
 contribution of $z_\infty$ (for $t\in \R \setminus\{0\}$) in 
 $$ \sum_{j=0}^2 (-1)^j Tr\Big(  h_1^* (\phi^t)^*  \, ; \, \Gamma (S_{\Q[i]}\, ; \,  \wedge^j T^* \mathcal{F}) \Big)
 $$ is equal to:
$$
 \frac{1}{| det ( 1 -D_y (h_1\circ \phi^t(z_\infty)) \, ; \, T_{z_\infty}  S_K/T_{z_\infty} \mathcal{F}) | } = \frac{1}{1+e^{-t}}\, .
$$ 
Now we can
 compute the contribution  of the fixed point $x_\infty \in S_\Q$ in $B(t)$.

For $t>0$ we find:
$$
\frac{1}{2} \Big( \frac{1}{1-e^{-t}} + \frac{1}{1+e^{-t}}\Big) = \frac{1}{1-e^{-2t}}\,.
$$
For $t<0$ we find:
$$
\frac{1}{2} \Big( \frac{e^t}{1-e^{t}} + \frac{e^t}{1+e^{t}}\Big) = \frac{e^t}{1-e^{2t}}\,.
$$  The proposition is proved.
\end{proof}\medskip

\subsection{ Primitive Dirichlet characters and leafwise flat ramified lines bundles.}$\;$
\medskip

Let $\chi$ be a primitive Dirichlet character mod $m$ ($\geq 3$), so $\chi$ is defined by a group homomorphism:
$$
\chi : G = ({ \Z /m \Z} )^* \rightarrow S^1\,.
$$ Consider  the cyclotomic field $K= \Q [ e^{\frac {2 i \pi}{m}}]$, it is a Galois extension of $\Q$  whose Galois group 
is equal to $G$ and has cardinal $\phi(m)$ ($\phi$ being the Euler function). Then define an action of $G$ on $S_K \times \C$ by $h\cdot (z, \lambda)= ( h\cdot z , \chi(h)^{-1} \lambda )$
for any $(h, z, \lambda ) \in G\times S_K \times \C$.

Using the axioms of Section 4.1, we are going to argue that the following leafwise ramified flat line bundle over $S_\Q$:
$$
\mathcal{L}_\chi = \frac{S_K \times \C}{G} \rightarrow S_\Q
$$ should provide the relevant cohomology allowing to interpret the explicit formula $\eqref{eq:EFchi}$ as a Lefschetz trace formula. We set for $j\in \{0,1,2\}$:
$$
\overline{H}^j(\mathcal{L}_\chi)\,=\, \frac{1}{| G|} \sum_{h\in G} h^* ( \overline{H}^j_{\mathcal{F}, K} \otimes_\R \C)= ( \overline{H}^j_{\mathcal{F}, K} \otimes_\R \C)^G\,.
$$
Since $\chi$ is a non trivial (primitive)  character, $G$ acts trivially on $\overline{H}^0_{\mathcal{F}, K}\simeq \R$, $\overline{H}^2_{\mathcal{F}, K}\simeq \R [\lambda_g]$, and 
 we get that $\overline{H}^j(\mathcal{L}_\chi)\,=\,0$ for $j=0$ or $j=2$. Recall that the actions of $(\phi^t)_{t\in \R}$ and $G$ 
 on $S_K$ commute, so the flow $\phi^t$ induces an action (still) denoted 
$( \phi^t)^*$ on $ \overline{H}^1(\mathcal{L}_\chi) $.
Now we are going to give a formal proof an Atiyah-Bott-Lefschetz trace formula whose geometric side 
coincides with the one of the explicit formula  \eqref{eq:EFchi} associated to $\Lambda ( \chi , s)$.
\begin{theorem} \label{thm:Dir} ("informal" theorem) Consider $\alpha \in C^\infty_{compact} (\R^{+*})$. 
 We then have:
\begin{equation} \label{eq:spectral} \begin{aligned}[rcl] &- TR \,\bigl( \int_\R \alpha(t) ( \phi^t)^* d t: \overline{H}^1(\mathcal{L}_\chi) \rightarrow \overline{H}^1(\mathcal{L}_\chi)\, \bigr) = \\ &
\sum_{p \in \mathbb{P} , \,p\wedge m =1} \log p \left( \sum_{k \geq 1} \chi(p)^k
\alpha(k \log p)  \right) + \int_{0}^{+\infty} \frac{\alpha (x) e^{-q x} }{1- e^{- 2 x}}\, d x\, ,
\end{aligned}
\end{equation} where $q\in \{0,1\}$ is such that $\chi(-1)= (-1)^q$.

\end{theorem}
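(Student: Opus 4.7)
The plan is to transcribe the proof of Theorem \ref{thm:traceram} to the present setting, with $S_K$ playing the role of $\tilde X$, $S_\Q$ that of $X$, the Galois covering $\pi_K:S_K\to S_\Q$ that of $\pi$, and axiom 6] of Section 4.1 replacing the Alvarez--Lopez--Kordyukov theorem. Since $\chi$ is a nontrivial primitive character and $G$ acts trivially on $\overline{H}^0_{\mathcal{F},K}\simeq \R$ and on $\overline{H}^2_{\mathcal{F},K}\simeq \R[\lambda_g]$ (by axiom 4] and \eqref{eq:Henri}), one has $\overline{H}^0(\cL_\chi) = \overline{H}^2(\cL_\chi) = 0$, so the left-hand side of \eqref{eq:spectral} equals $\sum_{j=0}^2(-1)^j\,{\rm TR}\int_\R\alpha(t)(\phi^t)^*\,dt\,|_{\overline{H}^j(\cL_\chi)}$. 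Using the projector $P_\chi = \frac{1}{|G|}\sum_{h\in G}\chi(h)\,h^*$ (which commutes with $\phi^t$) that cuts out $\overline{H}^j(\cL_\chi)$ inside $\overline{H}^j_{\mathcal{F},K}\otimes_\R\C$, this alternating sum rewrites as
\[
\sum_{j=0}^2(-1)^j\,\frac{1}{|G|}\sum_{h\in G}\chi(h)^{-1}\,{\rm TR}\int_\R\alpha(t)(h^{-1}\circ\phi^t)^*\,dt\,\bigg|_{\overline{H}^j_{\mathcal{F},K}\otimes_\R\C},
\]
to which the strategy of the proof of Theorem \ref{thm:traceram} applies essentially verbatim.

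Following the dichotomy of that proof, the closed-orbit contributions split into two cases. For an unramified prime $p$ (i.e.\ $p\wedge m = 1$), the closed orbit $\gamma_p\subset S_\Q$ lifts to orbits $\gamma_\mathcal{P}\subset S_K$ with trivial inertia; if $\tilde x_0$ is a lift of $x_p$ and $h_0\in G = (\Z/m\Z)^*$ denotes the Frobenius class $\bar p$, then $\phi^{k\log p}(\tilde x_0) = h_0^k\cdot \tilde x_0$. Assumption \eqref{eq:strength} forces $\det(1 - D(h_0^{-k}\phi^{k\log p})_{|T\mathcal{F}}) > 0$, hence $\epsilon_{k\gamma_p} = +1$, and part A of the proof of Theorem \ref{thm:traceram} yields the contribution $\log p\cdot\chi(h_0)^k\alpha(k\log p) = \log p\cdot\chi(p)^k\alpha(k\log p)$. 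For a ramified prime $p\mid m$, primitivity of $\chi$ mod $m$ is equivalent to $\chi|_{I_\mathcal{P}}$ being nontrivial for every $\mathcal{P}\mid p$, so $\sum_{s\in I_\mathcal{P}}\chi(s) = 0$ and part B of the same proof kills these contributions. Since ${\rm supp}\,\alpha\subset \R^{+*}$, no negative-time terms appear.

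The archimedean contribution is the novel point. Under the axioms of Section 4.3 specialized to $K = \Q[e^{2i\pi/m}]$, all $r_2 = |G|/2$ archimedean places of $K$ are complex, and each associated fixed point $z_{j,\infty}\in S_K$ has stabilizer $\{1,c\}\subset G$, where $c$ is the unique element corresponding to $-1\in(\Z/m\Z)^*$ (so $c^2 = 1$ and $\chi(c) = (-1)^q$). By \eqref{eq:fixes}, $Dc$ is the identity on $T_{z_{j,\infty}}\mathcal F$ and $-{\rm id}$ on the $1$-dimensional transverse line. Combining this with Lemma \ref{lem:cont}.1] and the argument of Proposition \ref{prop:OK} (the leafwise numerator determinant being positive in each case, by \eqref{eq:SO}), the local Guillemin--Sternberg contribution at $z_{j,\infty}$ for $t>0$ equals $\frac{1}{1-e^{-t}}$ when $h=1$ and $\frac{1}{1+e^{-t}}$ when $h=c$; for all other $h\in G$, none of the $z_{j,\infty}$ is a fixed point of $h^{-1}\circ\phi^t$, so the contribution is zero. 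Summing $\frac{1}{|G|}\sum_h\chi(h)^{-1}(\cdot)$ over the $r_2$ fixed points yields the archimedean density
\[
\frac{1}{2}\left(\frac{1}{1-e^{-t}} + \frac{(-1)^q}{1+e^{-t}}\right) = \frac{e^{-qt}}{1-e^{-2t}},
\]
whose integration against $\alpha(t)$ gives exactly the second term on the right of \eqref{eq:spectral}.

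The main obstacle is that the entire derivation is formal: it assumes the hypothetical existence of $(S_K,\mathcal{F},g,\phi^t)$ and, crucially, an equivariant strengthening of axiom 6] asserting that Guillemin--Sternberg-type local trace formulas apply to each operator $h^{-1}\circ\phi^t$ for $h\in G$. Within that framework, the two technical points requiring care are the sign bookkeeping (positivity of all relevant leafwise Jacobian determinants, forced by \eqref{eq:strength} and \eqref{eq:fixes}, so that every $\epsilon$ equals $+1$ and the cancellations of Lemma \ref{lem:cont} go through) and the identification of the Frobenius element $h_0 = \bar p\in (\Z/m\Z)^*$ with the Dirichlet value $\chi(p)$.
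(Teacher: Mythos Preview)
Your proposal is correct and follows essentially the same route as the paper's own proof: rewrite the trace on $\overline{H}^1(\cL_\chi)$ as an averaged trace over $G$ on $\overline{H}^\bullet_{\mathcal F,K}$, then compute the Guillemin--Sternberg contributions orbit by orbit exactly as in Theorem \ref{thm:traceram}, with the archimedean term handled via Proposition \ref{prop:OK} and \eqref{eq:fixes}. Two minor points where the paper is more explicit: (i) it proves as a separate lemma that primitivity of $\chi$ forces $\chi|_{I_{\mathcal P}}$ to be nontrivial for each $p\mid m$ (you assert this, even claiming an equivalence; only the implication is needed and used), and (ii) it disposes of the possibility that all archimedean places of $K$ are real by noting this would force $m=2$, which is excluded by hypothesis.
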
 
\begin{proof} We  proceed as in 
the proof of Theorem  \ref{thm:traceram} and
write the left handside of \eqref{eq:spectral} as:
\begin{equation} \label{eq:equiv}
-\frac{1}{| G|} \sum_{h\in G}\, TR \, \int_\R \alpha(s) \, \pi^1_\tau \,(h^{-1}\circ \phi^s)^*  \,d s : \overline{H}^1_{\mathcal{F}, K} \otimes_\R \C \rightarrow \overline{H}^1_{\mathcal{F}, K} \otimes_\R \C \,,
\end{equation}

First we compute (formally) the contributions of the closed orbits according to Guillemin-Sternberg trace formula. Let $p$ be a prime number such that $p\wedge m =1$. Then $p$ is unramified 
in $K= \Q[ e^{\frac{2 i \pi}{m}}]$ ($e=1$) and, with the notations of \eqref{eq:Fr}, 
the residue class $[p] \in (\Z / m \Z)^*$ belongs to $D_{\mathcal{P}_j}$ and is such that $\Theta_j [p]= {\rm Fr}$. See 
\cite{Samuel}[Page 109]. Notice that since here $G$ is abelian, the $D_{\mathcal{P}_j}$ are all equal for $1\leq j \leq r$. We then consider the closed orbit $k\gamma_p$ for $k\in \N^*$, $\gamma_p$ being iterated $k$ times.
Pick up a point $x\in \gamma_p$, for each $j \in \{1, \ldots, r\}$ we select a point 
$\tilde{x}_j \in \gamma_{\mathcal{P}_j}$ such that $\pi_K(\tilde{x}_j) = x$. 
Then using \eqref{eq:Fr} and  \eqref{eq:galoiscov} one immediately gets:
$$
\forall (j, t) \in \{1, \ldots, r\} \times \R , \; [p]^{-k} \phi^t(\tilde{x}_j) = (\tilde{x}_j +t - k \log p )\,.
$$ Now we recall  Assumption 6 in Section 4.1. 
Then,  proceeding as in the proof of Theorem \ref{thm:traceram} and using the equality $ r f = | G |$ ($ = \phi(m)$) 
one checks easily that the contribution of $k\gamma_p$ to the expression \eqref{eq:equiv} is 
equal to
$$
\log p \,   \chi(p)^k\, \alpha( k \log p) \, \epsilon_{k \gamma_p}\,.
$$ But thanks to Assumption 3] a) in Sect. 4.1 (or \eqref{eq:strength}), the sign $\epsilon_{k \gamma_p}$ is equal to $1$ (the determinant 
of a direct similitude being positive).  Therefore the contribution of $k \gamma_p$ is the one expected 
in \eqref{eq:spectral}.

In order to deal with the ramified closed orbits, we use the following:
\begin{lemma} Assume that the prime number $p$ divides $m$ 
so that $p$ is ramified in $K$ and the inertia groups $I_{\mathcal{P}_j}$ (introduced near \eqref{eq:Fr}) are not trivial.
Then the restriction of $\chi$ to any of the $I_{\mathcal{P}_j}$ ($1 \leq j \leq r$) is not trivial.
\end{lemma}
\begin{proof} We follow \cite{Neukirch}[Prop 10.3, Page 61]. Let $m= \prod_l l^{\nu_l}$ be  the prime factorization of $m$ and let 
$f_p$ be smallest positive integer such that 
$$
p^{f_p} \equiv 1 \, {\rm mod}\, ( m/p^{\nu_p} ) \,.
$$ Then one has in $K=\Q[e^{\frac{2 i \pi}{m}}] $ the factorization:
$$
p\,\mathcal{O}_K = ( \mathcal{P}_1 \cdots \mathcal{P}_r)^{\phi(p^{\nu_p})}\, ,
$$ where $\mathcal{P}_1 , \ldots  , \mathcal{P}_r$ are distinct prime ideals, all of norm $p^{f_p}$.
Using the Chinese remainder isomorphism:
$$
\bigl( \frac{\Z }{ m \Z}\bigr)^* \simeq (\frac{\Z}{\frac{m }{p^{\nu_p}} \Z} )^* \times (\frac{\Z }{p^{\nu_p} \Z} )^*\,,
$$ an inspection of the proof of \cite{Neukirch}[Prop 10.3, Page 61] allows to see  that for any $j\in \{1, \ldots , r\}$
$$
D_{\mathcal{P}_j} = D_{\mathcal{P}_1} \simeq < p > \times (\frac{\Z }{p^{\nu_p} \Z} )^*\; , \, 
I_{\mathcal{P}_j} = I_{\mathcal{P}_1} \simeq \{1\} \times (\frac{\Z }{p^{\nu_p} \Z} )^*\,.
$$ Now, the fact that $\chi$ is primitive implies clearly that the restriction of 
$\chi$ to $I_{\mathcal{P}_j}$ is not trivial (use \eqref{eq:prim} with $m'=p^{\frac{m}{\nu_p}}$).
\end{proof}
Suppose now that the prime number $p$ divides $m$ so that $\gamma_p$ is a ramified 
closed orbit. Observe that Condition \eqref{eq:strength} implies that the analogue of \eqref{eq:signcte} is satisfied with 
all the signs being positive.
The previous Lemma, Assumption 6 of Section 4.1 and the proof of Theorem \ref{thm:traceram}  then show formally that the geometric contribution of the ramified $\gamma_p$ 
in \eqref{eq:equiv} is  zero as expected in \eqref{eq:spectral}.

\medskip

Consider now the archimedean places. Assume first that they are all complex, 
then $\phi(m)= 2 r_2$.  Recall the associated fixed points $z_{1,\infty},\, ,\ldots, z_{r_{2},\infty} $ of $\phi^t$ in Section 4.1 and the elements $h_1,\ldots , h_{r_2}$ of the end of Section 4.3; they satisfy $h_j(z_{j,\infty})= z_{j,\infty} ,\, ( 1 \leq j \leq r_2)$. Since $G$ is abelian, the elements $h_j\,  ( 1 \leq j \leq r_2)$ all equal and actually they are equal to   $-1$ (see \cite{Samuel}[Page 109]). Then 
for any $h\in G\setminus \{ 1, -1\}$, $h(z_{j,\infty}) \not=z_{j,\infty}$ ($1\leq j \leq r_2$). The $z_{1,\infty},\, ,\ldots, z_{r_{2},\infty} $ are fixed points 
of $h_1 \phi^t$, $\phi^t$ for all $t\in \R$.
So we are reduced to analyze 
the contributions of  $z_{1,\infty},\, \ldots , z_{r_{2},\infty} $ to:
\begin{equation} \label{eq:redu}
-\frac{1}{| G|} \sum_{h\in \{1, -1\} }\, TR \, \int_\R \alpha(s) \, \pi^1_\tau \,(h^{-1}\circ \phi^s)^*  \,d s : \overline{H}^1_{\mathcal{F}, K} \otimes_\R \C \rightarrow \overline{H}^1_{\mathcal{F}, K} \otimes_\R \C \, .
\end{equation}
We recall the axioms of Section 4.1 and 4.3 (for $h_1= -1$). Then, using the arguments of  the proof of Proposition \ref{prop:OK} and  the Guillemin-Sternberg trace formula, one shows formally that 
the geometric contributions (for $t>0$) of the fixed points of $\phi^t$ 
 in \eqref{eq:equiv} (or \eqref{eq:redu}) is  given by:
$$
\int_{0}^{+\infty} \, \frac{1}{2 r_2} \sum_{j=1}^{r_2} \bigl( \frac{1}{1- e^{-t}} + \frac{\chi(-1)}{1+ e^{-t}} \bigr)\, \alpha(t) d t\, .
$$ But this is exactly the contribution of the archimedean place of $S_\Q$ in \eqref{eq:spectral}. 

Assume now that the archimedean places are all real so that $r_1= \phi(m)$. Then $ e^{\frac {2 i \pi}{m}}$ is real which 
implies that $m=2$. But this case is excluded by assumption.
\end{proof}

\m We have reproved incidentally the fact that a primitive Dirichlet $L-$function is a special case of an Artin $L-$function. 
The general  Artin $L-$functions will be the topic of the next Section.

\bigskip

\section{Artin conjecture as a consequence of an hypothetic Atiyah-Bott-Lefschetz 
proof of the explicit formula for $\widehat{\zeta}_K$.} $\;$

\m
 \m
 
 The following Section  should be viewed as a working programme or a motivation 
 for developing interesting mathematics. We shall perform computations using the various Assumptions 
 of Section 4.
 But, notice that we shall not use here Assumption 5] of Section 4.1 (the one which 
 would imply the Riemann Hypothesis).

\subsection{The Artin $L-$function $\Lambda (K,\chi,s)$.} $\;$

\m

Let $K$ be a finite Galois extension of $\Q$ with Galois group $G$. 
Consider a complex representation $\rho: G \rightarrow GL (V)$ 
where $V$ is a complex vector space of dimension $N$. Its character $\chi : G \rightarrow GL (V)$ is defined by $\chi(h)= {\rm Tr} \,\rho (h),\; h \in G$.  
We are going to recall the definition of the Artin 
$L-$function $\Lambda (K,\chi,s)$ associated to $\rho$ ($\rho$ is determined up to 
isomorphism by $\chi$).

Let $p\in \mathbb{P}$ be a prime number, we use the notations of Section 4.3 and \eqref{eq:Fr}. 
Let $\mathcal{P}_j, \mathcal{P}$ be two prime ideals of $\mathcal{O}_K$ lying over $p$, 
there exists $h\in G$ such that $h \mathcal{P}_j= \mathcal{P}$.
 Choose $\Phi_{\mathcal{P}_j} \in D_{\mathcal{P}_j}$ (modulo $I_{\mathcal{P}_j}$) and $\Phi_{\mathcal{P}} \in D_\mathcal{P}$
 such that $\Theta_j(\Phi_{\mathcal{P}_j}) = {\rm Fr}$ and $\Theta (\Phi_{\mathcal{P}}) = {\rm Fr}$. 
 Denote now by $V^{I_{\mathcal{P}_j}}$ the subset  of vectors $V$ which are invariant under the action of $I_{\mathcal{P}_j}$.
 It is then clear 
 that ${\rm det}\, ( Id - p^{-s} \rho( \Phi_{\mathcal{P}_j}) ; V^{I_{\mathcal{P}_j}})$ and 
 ${\rm det}\, ( Id - p^{-s} \rho(\Phi_{\mathcal{P}}) ; V^{I_{\mathcal{P}}})$ do not depend on the choice of 
 respectively $ \Phi_{\mathcal{P}_j}$, $\Phi_\mathcal{P}$. Therefore, we can assume that $\Phi_\mathcal{P}= h \Phi_{\mathcal{P}_j }h^{-1}$. Then since, $h D_{\mathcal{P}_j }h^{-1}= D_\mathcal{P}$ and $h I_{\mathcal{P}_j }h^{-1}= I_\mathcal{P}$, 
 we obtain:
 $$
 \forall s\in \C,\; {\rm det}\, ( Id - p^{-s} \rho(\Phi_{\mathcal{P}_j}) ; V^{I_{\mathcal{P}_j}}) = {\rm det}\, ( Id - p^{-s} \rho(\Phi_{\mathcal{P}}) ; V^{I_{\mathcal{P}}})\,.
 $$
 
If all the archimedean absolute values of $K$ are real we set $n^+_\sigma= N = {\rm dim}\, V$, $n^-_\sigma=0$. If all the archimedean absolute values of $K$ are complex, we set:
 $$
 n^+_\sigma= {\rm dim}\, \ker ( \rho(h_j) - Id ),\; n^-_\sigma= {\rm dim}\, \ker ( \rho(h_j) + Id )\,,
 $$ where $h_j$ is any of the elements $h_1, \ldots , h_{r_2}$ of $G$ introduced at the end of Section 4.3 (they are all conjugate to each other and satisfy $h_j^2=1$).
 
 Now, for $s\in \C$  such that $\Re s >1$, we define the Artin $L-$function as:
 \begin{equation} \label{def:A}
 \Lambda (K,\chi,s) = ({\mathcal{N}}( K,\chi))^{s/2}  \Gamma_\R(s)^{n_\sigma^+} \Gamma_\R(s+1)^{n^-_\sigma} \,
  \prod_{p \in \mathbb{P}} \frac{1}{{\rm det}\, ( Id - p^{-s} \rho(\Phi_{\mathcal{P}}) ; V^{I_{\mathcal{P}}})}\,,
 \end{equation} where the positive integer ${\mathcal{N}}( K,\chi)$ denotes the norm of the Artin conductor of $\chi$.
 Recall that $\Gamma_\R(s)= \pi^{-s/2} \Gamma(s/2)$.
 
 Now we  recall Brauer's theorem in order to explain  the meromorphic continuation of $ \Lambda (K,\chi,s)$.
 The character $\chi$ is an integral linear combination $\chi= \sum_{i=1}^k n_i \chi_{i *}$ where the $n_i \in \Z$ and  the $\chi_{i *} $ 
 are induced from characters $\chi_i$ of degree $1$ on subgroups $H_i= Gal (K : L_i)$, $L_i$ a suitable subfield of $K$.
From this, one can deduce that:
\begin{equation} \label{eq:Arti}
\Lambda (K,\chi,s) = \prod_{i=1}^k \Lambda (K,\chi_{i *},s)^{n_i} = \prod_{i=1}^k \Lambda( \widetilde{\chi_i} , s)^{n_i}\, , 
\end{equation} where $ \Lambda( \widetilde{\chi_i} , s)$ is the $L-$function attached to the Groessencharakter 
$\widetilde{\chi_i}$ associated to $\chi_i$. From these identities, one deduces that $\Lambda (K,\chi,s)$ admits 
a meromorphic continuation to $\C$ with zeroes and poles all belonging to the critical strip $0\leq \Re s \leq 1$. 
Moreover, it satisfies the functional equation:
$$
\Lambda (K,\chi,s) = W(\chi)  \Lambda (K,\overline{\chi},1-s)\,,
$$ where $W(\chi)$ is a complex constant of modulus $1$.

\m
\noindent {\bf Artin Conjecture}. If the representation $\rho$ is irreducible then $\Lambda (K,\chi,s) $ 
is entire (without any pole).

\m

So Artin conjecture means that each  zero of $\Lambda( \widetilde{\chi_i} , s)$ for negative $n_i$ in \eqref{eq:Arti}
is  compensated by a zero of another $\Lambda( \widetilde{\chi_j} , s)$ for positive $n_j$.
This conjecture is proved when $G$ is abelian. In the non abelian case, it is proved in several particular cases using deep methods (see Taylor's survey 
\cite{Taylor}), but it remains widely open in the general case. 

\m
\subsection{Explicit Formulas and Trace Formulas.}$\;$

\begin{theorem} Let $\{ \lambda_k, k \in I\}$  (resp. $\{ \mu_j, j \in J\}$ ) be the set of zeroes (resp. poles) 
of $\Lambda (K,\chi,s)$. Let $\alpha \in C^\infty_{compact} ( \R^{+*})$. Then one has:
\begin{equation} \label{eq:standard}
\begin{aligned}[rcl]
& - \sum_{k \in I} \int_0^{+\infty} \alpha(s) e^{ s \lambda_k}\, d s \, + \sum_{j \in J} \int_0^{+\infty} \alpha(s) e^{ s  \mu_j}\, d s \, = \\ &
\sum_{p \in \mathbb{P}} \log p \sum_{ n \geq 1} \alpha(n \log p) {\rm Tr}\,(  \Phi^n_\mathcal{P}: V^{I_\mathcal{P}} ) +
\int_0^{+\infty} \frac{\alpha(x)}{1 - e^{-2x}} (n^+_\sigma + n^-_\sigma e^{-x}) \, d x\,.
\end{aligned}
\end{equation} 
\end{theorem}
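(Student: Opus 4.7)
\medskip

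\textbf{Proof plan.} The strategy is the classical contour-integration recipe sketched in the excerpt for $\Lambda(\chi,s)$, applied to $\Lambda(K,\chi,s)$ and exploiting the support condition $\mathrm{supp}\,\alpha\subset\R^{+\ast}$ to make the ``negative-time'' contributions collapse.

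\medskip

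Set $F(s)=\int_0^{+\infty}\alpha(u)\,e^{su}\,du$. Since $\alpha$ is smooth and compactly supported in $\R^{+\ast}$, $F$ is entire and rapidly decreasing in $|\Im s|$ on every vertical strip, and $F(0)=\alpha(0)$ under a Mellin-like convention to be used below. Apply the residue theorem to
$$\frac{1}{2\pi i}\oint_{R_T}F(s)\,\frac{\Lambda'(K,\chi,s)}{\Lambda(K,\chi,s)}\,ds,$$
where $R_T$ is the counterclockwise rectangle with vertices $1+\epsilon\pm iT$, $-\epsilon\pm iT$, the sides chosen to miss zeros and poles of $\Lambda$. By Brauer's factorization \eqref{eq:Arti}, all $\lambda_k$ and $\mu_j$ lie in the critical strip and $\Lambda'/\Lambda$ grows only polynomially in $|\Im s|$ on vertical strips; combined with the super-polynomial decay of $F$, the two horizontal pieces contribute $o(1)$ as $T\to\infty$, and the enclosed residues yield $\sum_{k\in I}F(\lambda_k)-\sum_{j\in J}F(\mu_j)$.

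\medskip

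On the right-hand edge $\Re s=1+\epsilon$, use the logarithmic-derivative expansion
$$\frac{\Lambda'(K,\chi,s)}{\Lambda(K,\chi,s)} = \tfrac{1}{2}\log\mathcal{N}(K,\chi) + n^+_\sigma\tfrac{\Gamma'_\R(s)}{\Gamma_\R(s)} + n^-_\sigma\tfrac{\Gamma'_\R(s+1)}{\Gamma_\R(s+1)} - \sum_{p\in\mathbb{P}}\log p\sum_{n\geq 1}p^{-ns}\mathrm{Tr}(\Phi^n_\mathcal{P};V^{I_\mathcal{P}})$$
valid for $\Re s>1$, and integrate term by term. Mellin inversion $\tfrac{1}{2\pi i}\int_{\Re s=1+\epsilon}F(s)p^{-ns}\,ds=\alpha(n\log p)$ produces the prime sum (with sign $-$). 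The conductor constant multiplies $\alpha(0)=0$ and drops out. For the $\Gamma_\R$ terms, plug in the Euler representation $\tfrac{\Gamma'(s/2)}{\Gamma(s/2)}=\int_0^{+\infty}\bigl(\tfrac{e^{-u}}{u}-\tfrac{e^{-us/2}}{1-e^{-u}}\bigr)du$; the $e^{-u}/u$ piece pairs with $\alpha(0)=0$ and disappears, while $\tfrac{1}{2\pi i}\int F(s)e^{-us/2}\,ds=\alpha(u/2)$ followed by the substitution $v=u/2$ (and the parallel calculation for $\Gamma_\R(s+1)$) produces exactly $-\int_0^{+\infty}\tfrac{\alpha(v)}{1-e^{-2v}}\bigl(n^+_\sigma+n^-_\sigma e^{-v}\bigr)\,dv$.

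\medskip

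On the left-hand edge $\Re s=-\epsilon$, apply the functional equation $\Lambda(K,\chi,s)=W(\chi)\Lambda(K,\bar\chi,1-s)$, which gives $\Lambda'(K,\chi,s)/\Lambda(K,\chi,s)=-\Lambda'(K,\bar\chi,1-s)/\Lambda(K,\bar\chi,1-s)$, and change variables $s\mapsto 1-s$: the left integral becomes an integral on $\Re s=1+\epsilon$ of $F(1-s)\,\Lambda'(K,\bar\chi,s)/\Lambda(K,\bar\chi,s)$. But $F(1-s)=\int_0^{+\infty}\bigl(\alpha(u)e^u\bigr)e^{-su}\,du$ is the Laplace transform of a function supported in $\R^{+\ast}$, so Mellin inverting any factor $p^{-ns}$ or $e^{-us/2}$ yields a $\delta$-mass at the strictly non-positive point $-n\log p$ or $-u/2$, outside $\mathrm{supp}(\alpha(\cdot)e^{\cdot})$. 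Every term vanishes, and combining the three computations and multiplying by $-1$ produces \eqref{eq:standard}. The main technical obstacle lies in the Phragmén--Lindelöf-type growth bound for $\Lambda'/\Lambda$ on vertical lines (needed to discard the horizontal contributions), which is handled through Brauer's reduction \eqref{eq:Arti} to Hecke $L$-functions together with their classical convexity bounds, and in justifying the Fubini swap in the digamma computation via the cancellation $\alpha(0)=0$.
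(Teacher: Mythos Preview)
Your proof is correct and follows exactly the approach the paper indicates: the paper's own proof is the single sentence ``One proceeds exactly as for the proof of the explicit formula \eqref{eq:EFchi}, using the functional equation for $\Lambda(K,\chi,s)$ and its Eulerian product,'' and you have simply written out that contour-integration recipe in detail. One minor slip: the remark ``$F(0)=\alpha(0)$'' is literally false (rather $F(0)=\int\alpha$), but what you actually \emph{use} later---that the inverse Mellin transform of $F$ at $x=1$ equals $\alpha(0)=0$, killing the conductor and the $e^{-u}/u$ terms---is correct.
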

\begin{proof} One proceeds exactly as for the proof of the explicit formula \eqref{eq:EFchi}, using  the functional equation for 
$\Lambda (K,\chi,s)$ and its Eulerian product.
\end{proof}
\bigskip

Now we define an action of $G$ on $S_K \times V$ by
$$
h\cdot (z, v) = (h \cdot z , \rho^{-1}(h)\cdot v),\; \forall (h,z,v) \in G \times S_K \times V\,.
$$ Then using the axioms of Section 4.1, we are going to argue that the following leafwise 
ramified flat vector bundle over $S_\Q$:
$$
\cE_\rho = \frac{S_K \times V}{G} \rightarrow S_\Q
$$ should provide the relevant cohomology allowing to exhibit an explicit formula for $\Lambda (K,\chi,s) $ 
via a Lefschetz trace formula. 
We set for $j\in \{0,1,2\}$:
$$
\overline{H}^j(\mathcal{E}_\rho)\,=\, \frac{1}{| G|} \sum_{h\in G} h^* ( \overline{H}^j_{\mathcal{F}, K} \otimes_\R V)= ( \overline{H}^j_{\mathcal{F}, K} \otimes_\R V)^G\,.
$$ Recall that the actions of $(\phi^t)$ and $G$ on $S_K$ are assumed to commute.
We shall still denote by $(\phi^t)^*$ the action on $\overline{H}^j(\mathcal{E}_\rho)$ induced by the flow $(\phi^t)$.

Next we recall and use Assumption 6] of Section 4.1. According to the hypothesis of Section 4.3, 
for each zero $z_q$ of $\widehat{\zeta}_K$, $G$ leaves each 
$$
\ker (\theta_1 - z_q Id)^{n(z_q)} \otimes_\C V = W_q
$$ globally invariant and commutes with $\theta_1 \otimes_\C Id_V$ (which we shall denote simply 
$\theta_1$). Next, observe that $\theta_1 -z_q Id$ induces a nilpotent endomorphism of 
$\frac{1}{| G |} \sum_{h \in G} h \cdot W_q$, therefore:
\begin{equation} \label{eq:zero}
\forall t \in \R,\; {\rm Tr}\, ( e^{t \theta_1}:\, \frac{1}{| G |} \sum_{h \in G} h \cdot W_q\,  ) \, = \, d_q \,e^{t z_q}\,,
\end{equation} where $d_q=$ dim $\frac{1}{| G |}\sum_{h \in G} h \cdot W_q$. Notice that a priori some of the $d_q$ 
may be equal to $0$.
\begin{theorem} ("Informal" Theorem).  Assume that the representation $\rho$ is irreducible.
Let $\alpha \in C^\infty_{compact} ( \R^{+*})$. Then one has:
\begin{equation} \label{eq:Artin}
\begin{aligned}[rcl]
&-\sum_{z_q \in 
\widehat{\zeta}_K^{-1}\{0\}}\, d_q  \int_0^{+\infty} \alpha(s) e^{ s z_q}\, d s \, 
= \\ &
\sum_{p \in \mathbb{P}} \log p \sum_{ k \geq 1} \alpha(k \log p) {\rm Tr}\,(  \Phi^k_\mathcal{P}: V^{I_\mathcal{P}} ) +
 \int_0^{+\infty} \frac{\alpha(x)}{1 - e^{-2x}} (n^+_\sigma + n^-_\sigma e^{-x}) \, d x\,.
\end{aligned}
\end{equation} 
\end{theorem}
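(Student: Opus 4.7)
The plan is to imitate the proof of Theorem \ref{thm:Dir} (itself modeled on Theorem \ref{thm:traceram}), replacing the Dirichlet character by the representation $\rho$ and exploiting the more elaborate structure of Section~4.3. First I would identify the left-hand side of \eqref{eq:Artin} as the alternating trace of $\int_\R \alpha(s)(\phi^s)^*\,ds$ on $\overline{H}^\bullet(\cE_\rho)$. Assuming $\rho$ irreducible and non-trivial (the trivial case yielding $\widehat{\zeta}_K$ itself, handled by \eqref{eq:EFK}), the invariants $V^G$ vanish; since $G$ acts trivially on both $\overline{H}^0_{\mathcal{F},K}\simeq\R$ and $\overline{H}^2_{\mathcal{F},K}\simeq\R[\lambda_g]$ (the class $[\lambda_g]$ being constructed from the $G$-invariant metric $g$), the groups $\overline{H}^0(\cE_\rho)$ and $\overline{H}^2(\cE_\rho)$ vanish and only the $j=1$ term survives. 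Axiom~6] combined with \eqref{eq:zero} then evaluates this trace to $-\sum_{z_q}d_q\int_0^{+\infty}\alpha(s)e^{sz_q}\,ds$, producing the left-hand side of \eqref{eq:Artin}.

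For the geometric side, I would rewrite the trace on $\overline{H}^1(\cE_\rho)=(\overline{H}^1_{\mathcal{F},K}\otimes_\R V)^G$ via the averaging projector $P=\frac{1}{|G|}\sum_{h\in G}h^*\otimes\rho(h)$:
$$\mathrm{TR}\int_\R\alpha(s)(\phi^s)^*\,ds\,\Big|_{\overline{H}^1(\cE_\rho)}=\frac{1}{|G|}\sum_{h\in G}\mathrm{Tr}_V\bigl(\rho(h)\bigr)\cdot\mathrm{TR}\int_\R\alpha(s)\,\pi^1_\tau\,(h^{-1}\circ\phi^s)^*\,ds,$$
then apply the Alvarez-Lopez--Kordyukov heat-kernel interpolation used in the proof of Theorem~\ref{thm:AK} to reduce the alternating $j$-sum to a Guillemin--Sternberg sum of local contributions at fixed points of $h^{-1}\phi^s$. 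For each prime $p$ with $p\mathcal{O}_K=(\mathcal{P}_1\cdots\mathcal{P}_r)^e$, formulas \eqref{eq:Fr} and \eqref{eq:galoiscov} show that the pairs $(h,\tilde{x})\in G\times\gamma_{\mathcal{P}_j}$ satisfying $h^{-1}\phi^{k\log p}(\tilde{x})=\tilde{x}$ are exactly those with $h\in\Phi_{\mathcal{P}_j}^kI_{\mathcal{P}_j}$, since each point of $\gamma_{\mathcal{P}_j}$ is pointwise fixed by the inertia group; moreover \eqref{eq:strength} forces $\epsilon_{k\gamma_p}=+1$ (the determinant of $\mathrm{id}$ minus a direct similitude being positive). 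Accumulating across the $r$ primes above $p$ and the $|G/D_\mathcal{P}|$ cosets of the decomposition group, the contribution of $k\gamma_p$ becomes
$$\log p\cdot\alpha(k\log p)\cdot\frac{1}{|I_\mathcal{P}|}\sum_{u\in I_\mathcal{P}}\mathrm{Tr}_V\bigl(\rho(u\,\Phi_\mathcal{P}^k)\bigr)\;=\;\log p\cdot\alpha(k\log p)\cdot\mathrm{Tr}\bigl(\rho(\Phi_\mathcal{P}^k):V^{I_\mathcal{P}}\bigr),$$
using that $\frac{1}{|I_\mathcal{P}|}\sum_u\rho(u)$ is the orthogonal projector onto $V^{I_\mathcal{P}}$; this reproduces the prime sum on the right of \eqref{eq:Artin} and uniformly treats unramified and ramified primes.

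For the archimedean contribution, in the totally imaginary case ($|G|=2r_2$) each fixed point $z_{j,\infty}$ is fixed by $h\phi^t$ only for $h\in\{1,h_j\}$; applying \eqref{eq:fixes} and the Guillemin--Sternberg calculation of Proposition~\ref{prop:OK} yields a local contribution $\frac{1}{|G|}\bigl(\frac{\mathrm{Tr}\,\rho(1)}{1-e^{-t}}+\frac{\mathrm{Tr}\,\rho(h_j)}{1+e^{-t}}\bigr)$, and summing over $j=1,\dots,r_2$ together with the identities $\mathrm{Tr}\,\rho(1)=n^+_\sigma+n^-_\sigma$ and $\mathrm{Tr}\,\rho(h_j)=n^+_\sigma-n^-_\sigma$ produces exactly $\frac{n^+_\sigma+n^-_\sigma e^{-t}}{1-e^{-2t}}$. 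In the totally real case ($n^-_\sigma=0$), the singularity of $S_K$ at real archimedean fixed points identified in Proposition~\ref{prop:not} must be resolved by passing to the tower $K\subset K[i]$, exactly as Section~4.4 does for $S_\Q\subset S_{\Q[i]}$; the same averaging in the doubled cover then yields the correct real-place contribution. The principal obstacle is conceptual rather than computational: the entire argument is carried out formally inside the conjectural space $(S_K,\mathcal{F},g,\phi^t)$, whose existence is precisely the central open problem of Deninger's programme, and within that framework the most delicate concrete point is the tower construction in the real archimedean case, which requires carefully tracking fixed points in $S_{K[i]}\to S_K$ and the action of complex conjugation on $V$.
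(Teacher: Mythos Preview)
Your approach matches the paper's closely: same reduction to the alternating trace on $\overline{H}^\bullet(\cE_\rho)$, same vanishing of the $j=0,2$ terms via $V^G=\{0\}$, same use of \eqref{eq:zero} for the spectral side, same Guillemin--Sternberg computation for the closed orbits (your uniform treatment of ramified and unramified primes via the inertia projector $\frac{1}{|I_\mathcal{P}|}\sum_{u}\rho(u)$ is exactly how the paper handles the ramified case), and same calculation for the totally complex archimedean places.

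The one point of divergence is the totally real case, where you propose resolving the singularity of $S_K$ at the real archimedean fixed points by passing to the tower $S_{K[i]}\to S_K$. This is an unnecessary detour. The incompatibility flagged in Proposition~\ref{prop:not}.2] concerns only $t<0$; for $t>0$, Proposition~\ref{prop:not}.1] (via Lemma~\ref{lem:cont}.2]) already guarantees that the Guillemin--Sternberg contribution of a real archimedean fixed point $x_{j,\infty}$ is $\frac{1}{1-e^{-2t}}$, matching the explicit formula. Since $\alpha\in C^\infty_{compact}(\R^{+*})$, negative $t$ never enters. The paper therefore handles the totally real case directly on $S_K$: the action of $G$ on $\{x_{1,\infty},\dots,x_{r_1,\infty}\}$ is free, so only $h=1$ contributes, and one obtains $\frac{1}{r_1}\sum_{j=1}^{r_1}\frac{\mathrm{Tr}\,\rho(Id_V)}{1-e^{-2t}}=\frac{n^+_\sigma}{1-e^{-2t}}$ immediately. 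Your tower construction would require the axioms of Section~4.3 for the auxiliary extension $K[i]/\Q$ and the compatibility of $S_{K[i]}\to S_K$ with the representation, assumptions the paper does not need to invoke.
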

\noindent {\bf Comment.} Actually, we can only  assume that $V^G = \{0\}$, which is of course weaker than $\rho$ irreducible.
The $\Gamma$ factor $\Gamma_\R(s)^{n_\sigma^+} \Gamma_\R(s+1)^{n^-_\sigma}$ comes in the definition of 
$\Lambda (K, \chi, s)$ as a parachute and is not well motivated by a mathematical structure. It was introduced there (by Artin himself?)
in order to get \eqref{eq:Arti}.
That is why 
in the spectral side of \eqref{eq:standard} we have "uncontroled poles". This $\Gamma$ factor appears 
naturally in the computation of the contribution of the fixed points in \eqref{eq:Artin} and that is why the spectral side of \eqref{eq:Artin} is better "controled".

\begin{proof} We shall use the arguments of the proofs of Theorems  \ref{thm:traceram} and \ref{thm:Dir}. 
Since $G$ acts trivially on  $\overline{H}^0_{\mathcal{F}, K} \simeq \R$ and $\overline{H}^2_{\mathcal{F}, K} \simeq \R [\lambda_g]$, one gets that $\overline{H}^j(\mathcal{E}_\rho) =\{0\}$ for $j=0,2$.
Therefore, using \eqref{eq:zero}, one gets: 
$$
\sum_{j=0}^2 (-1)^j TR \,\bigl( \int_\R \alpha(t) ( \phi^t)^* d t: \overline{H}^j(\mathcal{E}_\rho) \, \bigr) =-\sum_{z_q \in 
\widehat{\zeta}_K^{-1}\{0\}}\, d_q  \int_0^{+\infty} \alpha(s) e^{ s z_q}\, d s \,.
$$ We now write the left hand side of this equality under the form:
\begin{equation} \label{eq:aba}
\begin{aligned}[rcl]
& \frac{1}{| G|} \sum_{h\in G}\, \sum_{j=0}^2 (-1)^j\,TR \, \bigl( \int_\R \alpha(s) \, \pi^j_\tau \,(h^{-1}\circ \phi^s)^*  \,d s : \overline{H}^j_{\mathcal{F}, K} \otimes_\R V\bigr) \, =\, \\ &
-\frac{1}{| G|} \sum_{h\in G}\,TR \, \bigl( \int_\R \alpha(s) \, \pi^1_\tau \,(h^{-1}\circ \phi^s)^*  \,d s : \overline{H}^1_{\mathcal{F}, K} \otimes_\R V\bigr) 
\end{aligned}
\end{equation} where $\pi^j_\tau$ (written for $\pi^j_\tau \otimes Id_V$)  denotes the Hodge projection onto leafwise harmonic forms. We now proceed to compute formally the 
geometric contributions in \eqref{eq:aba} of the closed orbits and of the fixed points of $(\phi^t)$ according to 
the Guillemin-Sternberg trace formula.

Thus,  we consider a prime number $p\in \mathbb{P}$ and the contribution of the closed orbit $k \gamma_p$, $k\in \N^*$. Let $\mathcal{P} $ 
be any of the prime ideals $\mathcal{P}_1, \ldots , \mathcal{P}_r$ of $\mathcal{O}_K$ such that $p \mathcal{O}_K= \mathcal{P}_1^e \ldots  \mathcal{P}_r^e$. Fix $x_0 \in \gamma_p$ and $\tilde{x}_0\in \gamma_\mathcal{P}$ such that $\pi_K(\tilde{x}_0) = x_0$. The axioms of Section 4.3 and \eqref{eq:galoiscov}  imply that:
$$
\forall t \in \R,\; \Phi_\mathcal{P}^{-k} \phi^t(\tilde{x}_0)= (\tilde{x}_0 + t - k\log p)\,.
$$

First assume that $p$ is not ramified in $K$ ($e=1$). Then, there are exactly $| G |$ curves (of the flow) in $S_K$ lying (via $\pi_K$) over $k \gamma_p$. 
They are given by
$$
t \rightarrow l\cdot \Phi_\mathcal{P}^{-k} l^{-1}\cdot\phi^t(l\cdot \tilde{x}_0), \; 0\leq t \leq k \log p,\; l \in G\,.
$$ Then the proof of Theorem  \ref{thm:AK} (\cite{A-K2}) shows that the geometric contribution of $k \gamma_p$ to \eqref{eq:aba} is computed according to Proposition \ref{prop:G-S} and is equal to:
$$
\frac{\log p}{| G|} \sum_{l \in G} \sum_{j=0}^2 (-1)^j \frac{ Tr\, \bigl( ^{t}D (l \Phi_\mathcal{P}^{-k} l^{-1}\circ \phi^{k\log p} ) ( l\cdot \tilde{x_0})\, ;\, \wedge^j T^*_{l\cdot \tilde{x_0}} \mathcal{F}  \bigr)}
{| \,{\rm  det}\, \bigl( {\rm id} - D (l \Phi_\mathcal{P}^{-k} l^{-1}\circ \phi^{k \log p})\bigr)_{| T_{l\cdot \tilde{x_0}} \mathcal{F}}\, |}\,\,\chi(l \Phi_\mathcal{P}^{k} l^{-1}) \, \alpha( k \log p)\, .
$$ The sign assumption \eqref{eq:strength} shows that all these determinants are positive. Therefore, the geometric contribution of $k \gamma_p$ to \eqref{eq:aba} is equal to $\log p\, \chi ( \Phi_\mathcal{P}^{k})\, \alpha (k \log p)$ as expected in \eqref{eq:Artin}.

\smallskip
Next assume that $p$ is  ramified in $K$ ($e>1$).
So $I_\mathcal{P}=G_{\tilde{x_0}}= \{u \in G /\, u\cdot \tilde{x_0}= \tilde{x_0}\}$ 
is not trivial. Then there are exactly $| G/ G_{\tilde{x_0}}|$ curves (of the flow) upstairs on $\tilde{X}$ which correspond 
(via $\pi$) to $k \gamma_{\log p}$. 
They are given by $t \mapsto l_j \Phi_\mathcal{P}^{-k} l_j^{-1}\circ \phi^t ( l_j \cdot \tilde{x}_0 )$ 
( $ 0 \leq t \leq k \log p$), where the $l_j$ ($1\leq j \leq r f$) run over a system of representatives of cosets of 
$ G/ G_{\tilde{x_0}}$. We have to count
each such curve $| G_{\tilde{x_0}}|$ times but with (possibly) 
different monodromies (ie action on the vector space factor $V$). More precisely,  for each representative $l_j$ the monodromy of  the curve labeled
\begin{equation} \label{eq:ramiA}
t \mapsto   l_j \Phi_\mathcal{P}^{-k} l_j^{-1} u\circ \phi^t ( l_j \cdot \tilde{x}_0 ),\; {\rm with}\, u \in l_j G_{\tilde{x_0}} l_j^{-1}\, ,
\end{equation} is equal to $\rho(u^{-1} l_j \Phi_\mathcal{P}^{k} l_j^{-1} )$. The proof given above in the unramified case, with a more precise use of the sign assumption \eqref{eq:strength}, allows to check that the geometric contribution of $k \gamma_p$ to \eqref{eq:aba} is equal to 
$$
\frac{\log p \, \alpha( k \log p)\, }{| G |} \sum_{j=1}^{f r} \sum_{ u' \in I_\mathcal{P}} Tr\, ( \rho ( l_j \Phi^k_{\mathcal{P}} l_j^{-1} l_j u' l_j^{-1} )\,=\,
\log p \, \alpha( k \log p) \, Tr \bigl( \rho(\Phi^k_{\mathcal{P}} )\,\frac{1}{e} \sum_{ u' \in I_\mathcal{P}} \rho(u') \bigr) \,.
$$ Since $ \frac{1}{e}\sum_{ u' \in I_\mathcal{P}} \rho(u')$  is a projection of $V$ onto $V^{I_\mathcal{P}}$ which commutes with $\rho(\Phi^k_\mathcal{P})$, this contribution coincides 
with 
$$\log p \, \alpha(k \log p) \, {\rm Tr}\,(  \rho( \Phi^k_\mathcal{P}) : V^{I_\mathcal{P}} )  $$ as expected in \eqref{eq:Artin}.

Now we come to the contribution to \eqref{eq:aba} of the fixed points (i.e. the archimedean places of $K$). We proceed as in the proof of Theorem \ref{thm:Dir} and give only a sketch. 

First assume that all the archimedean places are complex. 
Then $| G |= 2 r_2$, recall the associated fixed points $z_{1,\infty},\, ,\ldots, z_{r_{2},\infty} $ of $\phi^t$ in Section 4.1 and  the associated elements $h_j$  of $G$ in Section 4.3.  Then for each $j \in \{1, \ldots , r_2\}$,  $z_{j,\infty} $ is a  fixed point 
of $h_j \phi^t$, $\phi^t$ for all $t\in \R$, and 
for any $h\in G\setminus \{ 1, h_j\}$, $h(z_{j,\infty}) \not=z_{j,\infty}$. Thus we have to determine the contributions of  $z_{1,\infty},\, \ldots , z_{r_{2},\infty} $ to:
\begin{equation} \label{eq:reduA}
-\frac{1}{| G|} \sum_{h\in \{1, h_1,\ldots , h_{r_2}\} }\, TR \, \bigl( \, \int_\R \alpha(s) \, \pi^1_\tau \,(h^{-1}\circ \phi^s)^*  \,d s : \overline{H}^1_{\mathcal{F}, K} \otimes_\R V \rightarrow \overline{H}^1_{\mathcal{F}, K} \otimes_\R V \,\bigr) \, .
\end{equation}
We recall the axioms of Sections 4.1 and 4.3 (for the $h_j$). Then, using the arguments of  the proof of Proposition \ref{prop:OK} and  the Guillemin-Sternberg trace formula, one shows formally that 
the geometric contributions (for $t>0$) of the fixed points of $\phi^t$ 
 in \eqref{eq:aba} (or \eqref{eq:reduA}) is  given by:
$$
\int_{0}^{+\infty} \, \frac{1}{2 r_2} \sum_{j=1}^{r_2} \bigl( \frac{Tr\, \rho(Id_V)}{1- e^{-t}} + \frac{Tr\, \rho(h_j)}{1+ e^{-t}} \bigr)\, \alpha(t) d t\, .
$$

Recall that $\rho(h_j)$ is a symmetry of $V$
such that $Tr\, \rho(h_j) = n^+_\sigma - n^-_\sigma$. One then computes that for any real $x>0$:
$$
\frac{ Tr\, \rho(Id_V)} { 1-e^{-x}} + \frac{ Tr\, \rho(h_j)} { 1+e^{-x}} = \frac{ n^+_\sigma + n^-_\sigma} { 1-e^{-x}} + \frac{ n^+_\sigma - n^-_\sigma} { 1+e^{-x}} = \frac{ 2 n^+_\sigma}{ 1-e^{-2x}} + \frac{ 2 n^-_\sigma e^{-x}}{ 1-e^{-2x}}\,.
$$ One then proves easily that the contribution to \eqref{eq:aba} of the fixed points coincides with
$$ \int_0^{+\infty} \frac{\alpha(x)}{1 - e^{-2x}} (n^+_\sigma + n^-_\sigma e^{-x}) \, d x$$
 as expected in \eqref{eq:Artin}.

  When all the archimedean places are real, the situation is much simpler.
 Recall the set of fixed points $\{x_{1,\infty},\, \ldots, x_{r_1,\infty} \}$ associated to the real archimedean places, 
then for any $h\in G\setminus \{ 1 \}$,  $h(x_{j,\infty}) \not=x_{j,\infty}$ ($1\leq j \leq r_1$).
So we are reduced to analyze 
the contributions of  $x_{1,\infty},\, \ldots , x_{r_1,\infty}  $ to:
\begin{equation}\label{eq:redu1}
-\frac{1}{| G|}  TR \, \bigl(\,\int_\R \alpha(s) \, \pi^1_\tau \,(\phi^s)^*  \,d s : \overline{H}^1_{\mathcal{F}, K} \otimes_\R V \rightarrow \overline{H}^1_{\mathcal{F}, K} \otimes_\R V \, \bigr)\, .
\end{equation}

Therefore, the axioms of Section 4.1  and the Guillemin-Sternberg trace formula show formally that 
the geometric contributions (for $t>0$) of the fixed points of $\phi^t$ 
in \eqref{eq:aba}  (or \eqref{eq:redu1}) is given by:
$$
\int_{0}^{+\infty} \, \frac{1}{r_1} (\sum_{j=1}^{r_1}  \frac{Tr\, \rho(Id_V)}{1- e^{-2t}} ) \, \alpha(t) d t\,.
$$ But, since here  $ Tr\, \rho(Id_V)\, = N= n^+_\sigma$ and  $n^-_\sigma =0$, this is exactly the expected contribution  in \eqref{eq:Artin}. 
\end{proof}

\m

\begin{theorem} Let $(u_k)_{k\in A}$ and $(v_l)_{l\in B}$ be two sequences of 
points (with possible multiplicity) of the critical strip $\{z\in \C,\; 0\leq \Re z\leq 1\}$, $A$ and $B$ being two subsets of $\N$. Assume that $ \sum_{k\in A} \frac{1}{1 + | u_k|^2} + \sum_{l\in B} \frac{1}{1 + | v_l |^2} <+\infty$ and that for 
any $\alpha \in C^\infty_{compact}(\R^{+*})$:
\begin{equation} \label{eq:alpha}
\sum_{k\in A} \int_0^{+\infty} \alpha(s) e^{s u_k} \, d s \, = \, \sum_{l\in B} \int_0^{+\infty} \alpha(s) e^{s v_l} \, d s\,.
\end{equation} Then, there exists a bijection $\xi: A \rightarrow B$ such that 
for any $k \in A, u_k= v_{\xi(k)}$ with the same multiplicity.
\end{theorem}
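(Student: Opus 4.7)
The plan is to convert the integral identity~\eqref{eq:alpha} into an equality of meromorphic functions on $\C$ whose divisors record the multisets $(u_k)_{k\in A}$ and $(v_l)_{l\in B}$, and then to read off the bijection from residues. The density hypothesis prevents accumulation of $(u_k)$ and $(v_l)$ in $\C$, so $\mu_A:=\sum_{k\in A}\delta_{u_k}$ and $\mu_B:=\sum_{l\in B}\delta_{v_l}$ are locally finite measures on the critical strip $S=\{0\le\Re z\le 1\}$ whose atomic masses encode multiplicities. For $\alpha\in C^\infty_{compact}(\R^{+*})$ set $L\alpha(z):=\int_0^{+\infty}\alpha(s)e^{sz}\,ds$; two integrations by parts yield $|L\alpha(z)|\le C_\alpha/(1+|z|^2)$ for $\Re z\le 1$, so $\sum_k L\alpha(u_k)$ and $\sum_l L\alpha(v_l)$ converge absolutely and the hypothesis reads $\int_S L\alpha\,d(\mu_A-\mu_B)=0$; the goal becomes $\mu_A=\mu_B$.

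Next, fix once and for all an auxiliary point $\zeta_0\in\C$ with $\Re\zeta_0>1$ and define, for $\zeta$ away from the~$u_k$,
\[
F_A(\zeta):=\sum_{k\in A}\Bigl(\frac{1}{\zeta-u_k}-\frac{1}{\zeta_0-u_k}\Bigr)=\sum_{k\in A}\frac{\zeta_0-\zeta}{(\zeta-u_k)(\zeta_0-u_k)},
\]
and $F_B$ analogously. Each summand is $O(|u_k|^{-2})$ for large $|u_k|$, so by the density hypothesis $F_A$ is meromorphic on $\C$ with simple poles exactly at the $u_k$, the residue at $u_k$ being its multiplicity in $(u_k)_{k\in A}$; likewise for $F_B$. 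It therefore suffices to prove $F_A\equiv F_B$.

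The function $\beta(s):=e^{-\zeta s}-e^{-\zeta_0 s}$, with $\Re\zeta,\Re\zeta_0>1$, satisfies $\int_0^{+\infty}\beta(s)e^{us}\,ds=\tfrac{1}{\zeta-u}-\tfrac{1}{\zeta_0-u}$ for $\Re u\le 1$, but it is not compactly supported. We therefore approximate it by $\phi_{R,\varepsilon}\beta$, where $\phi_{R,\varepsilon}\in C^\infty_{compact}((\varepsilon/2,2R))$ equals $1$ on $[\varepsilon,R]$ with the usual scaling bounds $|\phi_{R,\varepsilon}^{(j)}|\le C_j\varepsilon^{-j}$ on $[\varepsilon/2,\varepsilon]$ and $|\phi_{R,\varepsilon}^{(j)}|\le C_jR^{-j}$ on $[R,2R]$. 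Hypothesis~\eqref{eq:alpha} applied to $\phi_{R,\varepsilon}\beta$ reads
\[
\sum_{k\in A}L(\phi_{R,\varepsilon}\beta)(u_k)=\sum_{l\in B}L(\phi_{R,\varepsilon}\beta)(v_l).
\]
Two integrations by parts rewrite $L(\phi_{R,\varepsilon}\beta)(u)$ as $u^{-2}\int(\phi_{R,\varepsilon}\beta)''e^{us}\,ds$; the contribution of the transition zone $[\varepsilon/2,\varepsilon]$ is bounded uniformly in $\varepsilon$ because the linear vanishing $|\beta(s)|=O(s)$ absorbs the $1/\varepsilon^2$ produced by $\phi_{R,\varepsilon}''$, while the contribution of $[R,2R]$ tends to $0$ by the exponential decay $|\beta(s)|\le 2e^{-(1+\eta)s}$ with $\eta:=\min(\Re\zeta,\Re\zeta_0)-1>0$. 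The outcome is a uniform estimate $|L(\phi_{R,\varepsilon}\beta)(u)|\le C/(1+|u|^2)$, and dominated convergence inside the sums as $R\to+\infty$ and $\varepsilon\to 0$ produces $F_A(\zeta)=F_B(\zeta)$ for all $\zeta$ with $\Re\zeta>1$.

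Since $F_A-F_B$ is meromorphic on $\C$ and vanishes on the open half-plane $\{\Re\zeta>1\}$, it is identically zero; equating residues at each $w\in\C$ forces the multiplicity of $w$ in $(u_k)$ to equal its multiplicity in $(v_l)$, which yields the desired bijection $\xi:A\to B$. The principal obstacle is the uniform $O(|u|^{-2})$ estimate for $L(\phi_{R,\varepsilon}\beta)(u)$ valid simultaneously as $R\to+\infty$ and $\varepsilon\to 0$: the naive blow-up $\|\phi_{R,\varepsilon}''\|_\infty\sim\varepsilon^{-2}$ would ruin the argument were it not exactly compensated by the linear vanishing of $\beta$ at the origin, which is precisely why one is forced to test with the difference $e^{-\zeta s}-e^{-\zeta_0 s}$ rather than the simpler exponential $e^{-\zeta s}$.
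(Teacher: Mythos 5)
Your proof is correct, but it follows a genuinely different route from the paper's. You test the hypothesis directly with smooth truncations $\phi_{R,\varepsilon}\beta$ of $\beta(s)=e^{-\zeta s}-e^{-\zeta_0 s}$, prove a bound $|L(\phi_{R,\varepsilon}\beta)(u)|\le C/(1+|u|^2)$ uniform in $R,\varepsilon$, and pass to the limit by dominated convergence over $k$ and $l$ to get equality of the regularized resolvents $F_A(\zeta)=\sum_{k\in A}\frac{\zeta_0-\zeta}{(\zeta-u_k)(\zeta_0-u_k)}$ and $F_B(\zeta)$ on the half-plane $\Re\zeta>1$; the identity theorem and a residue count then identify the two multisets. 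The paper instead replaces $\alpha(s)$ by $\alpha(s)e^{-2s}$, integrates by parts twice to see that $\sum_{k\in A}\frac{e^{s(u_k-2)}}{(u_k-2)^2}-\sum_{l\in B}\frac{e^{s(v_l-2)}}{(v_l-2)^2}$ is an affine function of $s$, kills the affine part by letting $s\to+\infty$, antidifferentiates repeatedly to obtain $\sum_{k\in A}(u_k-2)^{-2-r}=\sum_{l\in B}(v_l-2)^{-2-r}$ for all $r$, and then invokes Lafforgue's observation that these equalities are the vanishing of all derivatives at $0$ of the meromorphic function $z\mapsto\sum_{k\in A}(z+u_k-2)^{-2}-\sum_{l\in B}(z+v_l-2)^{-2}$, which therefore vanishes identically and whose poles encode the multiplicities. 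Both proofs rest on the same idea — the summability hypothesis lets one build a meromorphic function from \eqref{eq:alpha} whose poles record the multisets — but your intermediate object is the resolvent difference on an open half-plane, whereas the paper's is the full sequence of inverse power sums at the single point $2$; analytic continuation from an open set replaces the ``all Taylor coefficients vanish'' step. What your route buys is self-containedness (no distributional identification of an affine ambiguity, no inductive antidifferentiation, no separate generating-function lemma), at the price of the cutoff bookkeeping; on that score note three routine points: in $(\phi_{R,\varepsilon}\beta)''$ the cross term $2\phi_{R,\varepsilon}'\beta'$ must also be estimated (it is $O(\varepsilon^{-1})$ on an interval of length $\varepsilon/2$, hence harmless), the bound for bounded $|u|$ comes from the trivial estimate $|\beta(s)e^{us}|\le 2e^{-\eta s}$ rather than from the integration by parts, and if the test functions in the hypothesis are meant to be real-valued you should either split $\phi_{R,\varepsilon}\beta$ into real and imaginary parts or run the argument with real $\zeta,\zeta_0>1$ and conclude by analytic continuation.
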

\begin{proof} We give only a sketch. First, in \eqref{eq:alpha} we can replace 
$\alpha(s)$ by $\alpha(s) e^{-2 s}$. Then, two successive integration by parts allow to see that:
$$
 \int_0^{+\infty}\alpha^{''}(s) \sum_{k\in A}  \frac{e^{s( u_k-2)}}{( u_k-2)^2} \, d s \, = \,
 \int_0^{+\infty} \alpha^{''}(s) \sum_{l\in B}  \frac{e^{s( v_l-2)}}{( v_l-2)^2} \, d s\,.
$$ Therefore there exists two constants $M_1, M_2$ such that:
$$
\forall s \in [0, +\infty[,\; \sum_{k\in A}  \frac{e^{s( u_k-2)}}{( u_k-2)^2}\,=\, \sum_{l\in B}  \frac{e^{s( v_l-2)}}{( v_l-2)^2}\, 
+ M_1 + s M_2 \, .
$$ Letting $s\rightarrow +\infty$, one gets $M_1=M_2=0$. Now applying inductively $\int_{+\infty}^s$ to the previous 
identity, one obtains for every $r\in \N$:
$$
\sum_{k\in A}  \frac{1}{( u_k-2)^{2+r}}\,=\, \sum_{l\in B}  \frac{1}{( v_l-2)^{2+r}}\,.
$$ In order to finish the proof, we use an elegant argument pointed out to us by Vincent Lafforgue.
The previous equality implies that all the derivatives at $0$ of the following  meromorphic function vanish:
$$
z\mapsto \sum_{k\in A}  \frac{1}{( z+ u_k-2)^{2}}\,-\, \sum_{l\in B}  \frac{1}{( z+v_l-2)^{2}}\,.
$$ Hence this function is identically zero, which proves the theorem.
\end{proof}

The conjunction of the three last Theorems (formally) would imply Artin conjecture and that 
the zeroes of  $\Lambda (K,\chi,s)$ are the zeroes $z_q$ of $ \widehat{\zeta}_K$ with multiciplity $d_q$. Of course it is understood that if $d_q=0$ then $z_q$ is not a zero of $\Lambda (K,\chi,s)$.

	\bibliographystyle{amsalpha}

\begin{thebibliography}{ACDE}

\bibitem[A-K00]{A-K2} J. A. Alvarez Lopez and Y. Kordyukov: {\it Distributional 
Betti numbers of transitive foliations of codimension one}. Foliations: geometry 
and dynamics (Warsaw, 2000), World Sci. Publishing, River Edge, NJ, (2002), 
pages 159-183.

\bibitem[A-K01]{A-K1} J. A. Alvarez Lopez and Y. Kordyukov: {\it Long time 
behaviour of leafwise heat flow for Riemannian foliations}. Compositio Math. 
{\bf 125} (2001), pages 129-153.

\bibitem[A-K05]{A-K3} J. A. Alvarez Lopez and Y. Kordyukov: {\it
Distributional Betti numbers for Lie foliations}. To appear in the proceedings of 
Bedlewo conference 2005 on $C^*-$algebras.

\bibitem[AKL]{AKL} J. Alvarez-Lopez, Y. Kordyukov, and E. Leichtnam: {\it A trace formula for foliated flows } . 
Work in progress.

\bibitem[Cara]{Cara} O. Caramello: {\it Topological Galois Theory}.  arxiv:math.CT/1301.0300, 2012.

\bibitem[Co94]{C 1} A. Connes: {\it \underline{Noncommutative Geometry}}. 
Academic Press.

\bibitem[Co99]{C 2} A. Connes: {\it Trace formula in noncommutative
geometry and the zeroes of the Riemann zeta function}. Selecta Math.
(N.S.) 5 (1999), no. 1, pages 29-106.














\bibitem[De94]{D 3} C. Deninger: {\it Motivic $L-$functions 
and regularized determinants }. Proc. Symp. Pure Math. {\bf 55}, 1 (1994), pages 707-743. 

\bibitem[De94b]{D 3b} C. Deninger: {\it Motivic $L-$functions 
and regularized determinants II}.
F. Catanese (ed.), Arithmetic Geometry, Cortona, 1994
Symp. Math. 37, Cambridge Univ. Press 1997, pages 138-156.

\bibitem[De98]{D 1} C. Deninger: {\it Some analogies between number
theory and dynamical systems onf foliated spaces}.
Doc. Math. J. DMV. Extra volume ICM I, (1998), pages 23-46.

\bibitem[De99]{D 0c} C. Deninger: {\it On dynamical systems and their possible significance 
for Arithmetic Geometry}. In: A. Reznikov, N. Schappacher (eds.), Regulators in Analysis, 
Geometry and Number Theory. Progress in Mathematics {\bf 171}, (1999), 
Birkhauser, pages 29-87. 


\bibitem[De-Si00] {D 1b} C. Deninger and W. Singhof: {\it A note on dynamical trace formulas}, 
Dynamical, spectral and arithmetic zeta functions (San Antonio, TX, 1999), Contemp. 
Math., 290, Amer. Math. Soc., Providence, RI, (2001), pages 41-55.

\bibitem[De-Si02] {D-S} C. Deninger and W. Singhof: {\it Real polarizable Hodge structures 
arising from foliations}.  Annals of Global Analysis and Geometry 21,  2002,  pages 377-399

\bibitem[De01]{D 2b} C. Deninger: {\it Number theory and dynamical systems 
on foliated spaces}. Jahresberichte der DMV. {\bf 103},  (2001), No 3, pages 
79-100.

\bibitem[De01b]{D 1c} C. Deninger: {\it A note on arithmetic topology and dynamical systems}. 
Algebraic number theory and algebraic geometry, Contemp. Math., 300, Amer. Math. Soc., 
Providence, RI, (2002), pages 99-114.



\bibitem[De02]{D 2} C. Deninger: {\it On the nature of explicit
formulas in analytic number theory, a simple example}. Number theoretic methods (Iizuka, 2001),
 Dev. Math., 8, Kluwer Acad. Publ., Dordrecht, (2002), pages 97-118.
 
\bibitem[De07b]{D 8} C. Deninger: {\it Analogies between analysis on foliated spaces 
and arithmetic geometry}, to appear in the Proceedings of a conference 
in the honour of Hermann Weyl.

 \bibitem[F-M]{FM} M. Flach and B. Morin:  {\it On the Weil-Etale topos of regular arithmetic schemes}. 
 Doc. Math. 17 (2012), pages 313-399.

\bibitem[Gelbart]{Gelbart}  S. Gelbart: {\it Class field theory, the Langlands program, and its application to number theory }. Automorphic forms and the Langlands program, Adv. Lect. Math. (ALM), 9, Int. Press, Somerville, MA, 2010, pages 21-67.


\bibitem[G-S77]{G-S} V. Guillemin and S. Sternberg: \underline{{\it Geometric 
Asymptotics}}. Mathematical surveys and monographs. Number 14.
Published by the A.M.S, Providence, R.I. 1977.






\bibitem[L.Lafforgue]{Lafforgue} L. Lafforgue: {\it L'ind\'ependance  la cohomologie l-adique et la correspondance de Langlands sont-elles des \'equivalences de Morita entre topos classifiants ? } Written text that can be found on 
http://www.ihes.fr/~lafforgue/publications.html .

 \bibitem[Lapid]{Lapid} E. Lapid: {\it Introductory notes on the trace formula}. Automorphic forms and the Langlands program, Adv. Lect. Math. (ALM), 9, Int. Press, Somerville, MA, 2010, pages 135-175.
 
\bibitem[Lei03] {El} E. Leichtnam: {\it An invitation to 
Deninger's work on arithmetic zeta functions}.  Geometry, spectral theory, groups, and dynamics, Contemp. Math., 387, Amer. Math. Soc., Providence, RI, (2005), pages 
201-236.

\bibitem[Lei07] {El1} E. Leichtnam: {\it Scaling group flow and 
Lefschetz trace formula for laminated spaces with a $p-$adic transversal}, Bulletin 
des Sciences Mathematiques, 131,  (2007), pages 638-669.

\bibitem[Lei08] {Rend} E. Leichtnam: {\it On the analogy between Arithmetic Geometry and 
foliated spaces}. Rendiconti di Matematica, Serie VII Volume 28, Roma (2008), pages 163-188.





\bibitem[Meyer05]{Meyer} R. Meyer: {\it On a representation of the idele class group related to primes and zeros of L-functions}. Duke Math. J. 127 (2005), no. 3, pages 519-595.

\bibitem[Mor] {Morin} B. Morin: {\it Sur l'analogie entre le système dynamique de Deninger et le topos Weil-Etale. (French) [On the analogy between Deninger's dynamical system and the Weil-Etale topos}.  Tohoku Math. J. (2) 63 (2011), no. 3, pages 329-361. 

\bibitem[Neu99] {Neukirch} J. Neukirch: {\it \underline{Algebraic number theory}}. Springer, 
volume 322. (1999).


 







\bibitem[Sam]{Samuel} P. Samuel: {\it \underline{Th\'eorie alg\'ebrique des nombres}}. Editions Hermann 1967.

\bibitem[Se60]{Serre} J-P. Serre: {\it Analogues Kaehleriens de certaines 
conjectures de Weil}. Annals of Math. 65, (1960), pages 392-394.

\bibitem[Taylor]{Taylor} R. Taylor: {\it Galois representations}. 
Annales de la Facult\'e des Sciences de Toulouse 13 (2004), pages 73-119.

\end{thebibliography}

\end{document}